\def\h{ {\cal H} }
\def\l{ {\cal L} }
\def\v{ {\cal V} }
\def\u{ {\cal U} }
\def\b{ {\cal B} }
\def\u{ {\cal U} }
\def\ii{ {\cal I} }
\def\s{ {\cal S} }
\def\e{ {\cal E} }
\def\p{ {\cal P} }
\def\r{ {\cal R} }
\def\hh{\mathbb{H}}
\newtheorem{teo}{Theorem}[section]
\newtheorem{prop}[teo]{Proposition}
\newtheorem{lem}[teo]{Lemma}
\newtheorem{coro}[teo]{Corollary}
\newtheorem{defi}[teo]{Definition}
\theoremstyle{definition}
\newtheorem{rem}[teo]{Remark}
\title{Canonical sphere bundles of the Grassmann manifold.}
\begin{document}

\date{}
\author{E. Andruchow, E. Chiumiento and G. Larotonda}

\maketitle 

\begin{abstract}
For a given Hilbert space $\mathcal H$, consider the space of self-adjoint projections $\mathcal P(\mathcal H)$. In this paper we study the differentiable structure of a canonical sphere bundle over $\mathcal P(\mathcal H)$ given by
$$
\mathcal R=\{\, (P,f)\in \mathcal P(\mathcal H)\times \mathcal H \, :  \, Pf=f , \, \|f\|=1\, \}.
$$
We establish the smooth action on $\mathcal R$ of the group of unitary operators of $\mathcal H$, therefore $\mathcal R$ is an homogeneous space. Then we study the metric structure of $\mathcal R$ by endowing it first with the uniform quotient metric, which is a Finsler metric, and we establish minimality results for the geodesics. These are given by certain one-parameter groups of unitary operators, pushed into $\mathcal R$ by the natural action of the unitary group. Then we study the restricted bundle $\mathcal R_2^+$ given by considering only the projections in the restricted Grassmannian, locally modelled by Hilbert-Schmidt operators. Therefore we endow $\mathcal R_2^+$ with a natural Riemannian metric that can be obtained by declaring that the action of the group is a Riemannian submersion. We study the Levi-Civita connection of this metric and establish a Hopf-Rinow theorem for $\mathcal R_2^+$, again obtaining a characterization of the geodesics as the image of certain one-parameter groups with special speeds. \footnote{{\bf 2010 MSC: 22E65, 47B10, 58B20}.

{\bf Keywords: sphere bundle, Finsler metric, Riemannian metric, geodesic, projection, flag manifold}.}
\end{abstract}


\bigskip

\section{Introduction}

Let $\h$ be a complex Hilbert space, $\b(\h)$ the algebra of bounded operators and $\p(\h)$ the set of all orthogonal projections on $\h$.  Throughout, we identify $\p(\h)$ with the Grassmann manifold of $\h$ (i.e. the set consisting  of all closed subspaces of $\h$). In this paper, we study the differential and metric structure of the following set
$$
\r=\{\, (P,f)\in\p(\h)\times \h \, :  \, Pf=f , \, \|f\|=1\, \},
$$
which is the total space of the canonical sphere bundle given by 
$
\pi_\r: \r\to\p(\h) , \ \ \pi_\r(P,f)=P.
$
Clearly, the unitary group  $\u(\h)$ acts on $\r$: 
$$
U\cdot (P,f)=(UPU^*,Uf), \, \, \, \, U\in\u(\h), \, \, (P,f)\in\r.
$$
As we shall see below,  the connected components of $\r$ turn out to be a smooth   homogeneous spaces of $\u(\h)$. Thus, the present work can be seen as a contribution to the geometry of infinite dimensional homogeneous spaces arising in operator theory and operator algebras, which has been a subject of study in different settings, for instance Grassmann manifolds \cite{al, BR07b, CPR93, K79, pr}, Stiefel manifolds \cite{ARV, Ch10} and orbits of selfadjoint operators \cite{AL10, BV17}.   Also abstract homogeneous spaces are considered in \cite{agc10, cocometricacociente, m-l r}; for more examples and a detailed account we refer to the book \cite{beltita}. On the other hand, the definition of the bundle $(\pi_\r, \r, \p(\h))$  is motivated from classical concepts in topology. Canonical bundles over finite dimensional Grassmann manifolds  played a fundamental role in classification problems  of vector bundles \cite{milnor}, meanwhile results on classification of sphere bundles goes back to the works \cite{S44, W40}.

The contents of this paper are as follows. In Section \ref{sec2}  we show that the connected components of $\r$ coincide with orbits given by the action defined above. This is indeed a direct consequence of the following fact: given $(P_0,f_0) \in \r$, the map 
$$
\rho_{(P_0,f_0)}: \u(\h)\to \r,  \, \, \, \rho_{(P_0,f_0)}(U)=(UP_0U^*,Uf_0)
$$ 
has continuous local cross sections. In Section \ref{sec3} we also use this result to prove that each orbit is a smooth homogeneous space of $\u(\h)$ and a submanifold of $\b(\h) \times \h$. The latter means that the quotient topology in $\r$ coincides with the topology inherited from $\b(\h) \times \h$, and that tangent spaces, which can be described as
$$
(T\r)_{(P_0,f_0)}=\{ \, (XP_0-P_0X,Xf_0)  \,  : \, X=-X^* \in \b(\h),    \, \}, 
$$
are closed and complemented in $\b(\h) \times \h$.  
Section \ref{sec4} contains the results on the metric geometry of $\r$. As a homogeneous space, the natural way to endow each component of $\r$ is to push down a metric from $\u(\h)$.  If we consider in $\u(\h)$ the Finsler metric defined by using the operator norm $\| \, \cdot \, \|$, then the Finsler metric on $\r$ is given as follows: for $(XP_0-P_0X,Xf_0) \in (T\r)_{(P_0,f_0)}$, 
$$
\| (XP_0-P_0X,Xf_0)  \|_{(P_0,f_0)} : = \inf \{ \,  \|  X + Y \| \,  : \, Y=-Y^*, \, YP_0=P_0Y,  \, Yf_0=0  \, \}.
$$ 
This is  the quotient norm in $(T\r)_{(P_0,f_0)} \simeq \b_{ah}(\h)/ \mathfrak{g}$, where $\b_{ah}(\h)$ is the space of anti-Hermitian operators and $\mathfrak{g}$ is the Lie algebra of the isotropy group.
Indeed, note that $ Y=-Y^*$, $YP_0=P_0Y$ and $Yf_0=0$ if and only if $Y \in \mathfrak{g}$.  
We show that the quotient norm is always attained, or in the terminology of \cite{cocometricacociente}, there exists a \textit{minimal lifting} for each tangent vector. To prove this, we observe that the quotient norm  can be interpreted as a two-step best approximation problem using $3 \times 3$ block operator matrices. In the first step, we use Krein's extension theorem (see \cite{krein, RN55, DKW82}).

The Finsler metric defined in $\r$ allows us to measure the length of curves. If $\gamma:[0,1] \to \r$ is a piecewise smooth curve, then its length is given by
$$
L(\gamma)=\int_0^1 \| \dot{\gamma}(t) \|_{\gamma(t)} \, dt.
$$
We prove that the initial value problem has a solution: given $(P_0,f_0) \in \r$ and a  tangent vector $(XP_0-P_0X,Xf_0) \in (T\r)_{(P_0,f_0)}$,  there exists $X_0^*=-X_0$ such that the curve 
$$
\delta(t)=e^{tX_0}\cdot (P_0,f_0)=(e^{tX_0}P_0e^{-tX_0}, e^{tX_0}f_0)
$$ 
satisfies $\delta(0)=(P_0,f_0)$, $\dot{\delta}(0)=(XP_0-P_0X,Xf_0)$ and $\delta(t)$ has minimal length in some interval. 
This problem was already solved in \cite{pr} for the Grassmann manifold $\p(\h)$, where the quotient Finsler metric coincides with the metric given by the usual operator norm. Remarkably, it was also solved for smooth homogeneous spaces which are the quotient of the unitary group of a von Neumann algebra by the unitary group of a sub-algebra \cite{cocometricacociente}. Although our homogeneous space does not fit in this category, our proof is adapted from that work.

Section \ref{sec5} is devoted to the Riemannian geometry of the canonical sphere bundle of the restricted (or Sato) Grassmannian \cite{al, BR07b, ps}. 
The total space of this bundle is
$$
\r_2^+=\{(P,f)\in\r: P\in\p_+^2(\h)\},
$$ 
where $\p_+^2(\h)$ is the connected component containing a fixed projection $P_+$ in the restricted Grassmannian. The group $\u_2(\h)$ of unitaries which are Hilbert-Schmidt perturbations of the identity acts transitively on $\p_+^2(\h)$. We observe  that $\r_2^+$ is also acted  upon  transitively by $\u_2(\h)$, and has structure of smooth homogeneous space of $\u_2(\h)$. It is a submanifold of $\b_2(\h) \times \h$, where $\b_2(\h)$ denotes the Hilbert-Schmidt operators on $\h$. We endow $\r_2^+$ with two different Riemannian metrics: the ambient metric and 
the reductive (or quotient) metric. These metrics are shown to be equivalent and complete. The geodesics of the reductive metric are computed. Any geodesic is minimal up to a critical time value (which is characterized). The main result in this section establishes an estimate of the geodesic radius, which is at least $\pi/4$ (points of $\r_2^+$ at distance less than $\pi/4$ are joined by a unique minimal geodesic).

\section{Transitivity and local cross sections for the action}\label{sec2}

Given $P_0\in\p(\h)$, we define
$$
\r_{P_0}=\{(P,f)\in\r: P \hbox{ is unitary equivalent to } P_0\}.
$$
It is well known that $P$ is unitary equivalent to $P_0$ if and only if $\dim R(P_0)=\dim R(P)$ and $\dim N(P_0)=\dim N(P)$. 
\begin{prop}\label{secloc}
Let $P_0\in\p(\h)$. The action of  $\u(\h)$ on $\r_{P_0}$ is transitive. 
If $f_0\in R(P_0)$, then the map
$$
\rho_{(P_0,f_0)}: \u(\h)\to \r_{P_0} , \ \  \rho_{(P_0,f_0)}(U)=(UP_0U^*,Uf_0)
$$ 
has continuous local cross sections.
\end{prop}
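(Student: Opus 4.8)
My plan is to prove the two assertions separately, beginning with transitivity. Given $(P,f),(Q,g)\in\r_{P_0}$, I would use that $P$ and $Q$ are unitarily equivalent to $P_0$, so that $\dim R(P)=\dim R(Q)$ and $\dim N(P)=\dim N(Q)$. Completing the unit vector $f$ to an orthonormal basis of $R(P)$ and $g$ to one of $R(Q)$, and choosing a bijection of these bases sending $f$ to $g$, one obtains a unitary $U_1\colon R(P)\to R(Q)$ with $U_1f=g$; taking any unitary $U_2\colon N(P)\to N(Q)$, the operator $U=U_1\oplus U_2\in\u(\h)$ satisfies $UPU^*=Q$ and $Uf=g$, that is, $U\cdot(P,f)=(Q,g)$.

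For the continuous local cross sections, note that $\rho:=\rho_{(P_0,f_0)}$ is $\u(\h)$-equivariant and $\rho(I)=(P_0,f_0)$, so it suffices to build a continuous section $s$ of $\rho$ on a neighbourhood $\v$ of $(P_0,f_0)$ with $s(P_0,f_0)=I$: for arbitrary $(P_1,f_1)\in\r_{P_0}$, transitivity gives $U_1$ with $U_1\cdot(P_0,f_0)=(P_1,f_1)$, and then $\sigma(P,f):=U_1\,s(U_1^*PU_1,U_1^*f)$ is, by equivariance, a continuous local section of $\rho$ near $(P_1,f_1)$. I would construct $s$ in two steps. \textbf{Step 1 (carry $P_0$ to $P$).} For $\|P-P_0\|<1$ set $V_P=PP_0+(I-P)(I-P_0)$; then $V_P$ is invertible, $V_PP_0=PV_P$, and $V_P^*V_P$ commutes with $P_0$, so the unitary part $U_P=V_P(V_P^*V_P)^{-1/2}$ of its polar decomposition depends continuously on $P$, satisfies $U_PP_0U_P^*=P$ and $U_{P_0}=I$. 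Set $h:=U_Pf_0$, a unit vector of $R(P)$, continuous in $P$, with $h\to f_0$ as $P\to P_0$.

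\textbf{Step 2 (carry $h$ to $f$ inside $R(P)$).} This is the only delicate point. Let $q=ff^*$ and $r=hh^*$ be the rank one projections onto $\mathbb{C}f$ and $\mathbb{C}h$; both lie below $P$. I would set
\[
\tilde V=qr+(P-q)(P-r)+(I-P),
\]
which commutes with $P$, equals the identity on $N(P)$, restricts on $R(P)$ to the Grassmann-type intertwiner of $r$ and $q$ (so $\tilde Vr=q\tilde V$), and is invertible provided $f\not\perp h$ — which holds on a neighbourhood of $(P_0,f_0)$. Its unitary part $\tilde W=\tilde V(\tilde V^*\tilde V)^{-1/2}$ then commutes with $P$, equals the identity on $N(P)$, and carries $r$ to $q$; hence $\tilde Wh=\lambda f$ for a unimodular scalar $\lambda=\lambda(P,f)$ depending continuously on $(P,f)$ with $\lambda\to1$ at $(P_0,f_0)$. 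Correcting the phase, $W_{P,f}:=(I+(\bar\lambda-1)ff^*)\tilde W$ still commutes with $P$, equals the identity on $N(P)$, and now $W_{P,f}h=f$, with $W_{(P_0,f_0)}=I$. Then $s(P,f):=W_{P,f}U_P$ is continuous, $s(P_0,f_0)=I$, and $s(P,f)P_0s(P,f)^*=W_{P,f}PW_{P,f}^*=P$ while $s(P,f)f_0=W_{P,f}h=f$; that is, $\rho\circ s=\mathrm{id}_\v$.

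The hard point is Step 2: one needs a unitary moving $h$ to $f$ that is simultaneously $P$-commuting, trivial on $N(P)$, continuous in $(P,f)$, and equal to $I$ at the base point. The naive candidate — the Householder reflection exchanging $h$ and $f$ — fails the last two requirements, since its distance to $I$ is constantly equal to $2$ for $h\neq f$, so it cannot be continuously matched to $I$ along $\{h=f\}$; this is what forces the use of the unitary part of the rank one Grassmann intertwiner $\tilde V$ together with the scalar phase correction. The remaining verifications (invertibility of $V_P$ and $\tilde V$ from $\|P-P_0\|<1$ and from $f\not\perp h$; that polar decomposition preserves the relevant intertwining and commutation with $P_0$, respectively $P$; and that all of these maps are defined on one common neighbourhood of $(P_0,f_0)$ in $\r_{P_0}$) are routine and parallel the Grassmann case treated in \cite{pr}.
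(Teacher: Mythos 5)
Your proof is correct and follows essentially the same two-step strategy as the paper's: a unitary intertwining $P_0$ with $P$ (your $U_P$ is the paper's $\mu_{P_0}(P)$) composed with a range-preserving unitary that carries the transported base vector to $f$. The only difference is cosmetic: the paper realizes the second step as an explicit planar rotation $\nu_{\xi,\eta}$ inside $R(P_0)$ (applied before conjugating by $\mu_{P_0}(P)$) on the open set $\|\mu_{P_0}(P)^*f-f_0\|<2$, whereas you work inside $R(P)$ with the unitary part of the rank-one intertwiner plus a phase correction on the open set $f\not\perp U_Pf_0$; both yield a continuous section equal to the identity at the base point.
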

\begin{proof}
Let $f_0\in R(P_0)$ with $\|f_0\|=1$, and $(P,f)\in\r_{P_0}$. There exists $U\in\u(\h)$ such that $UP_0U^*=P$. Then $U^*f$ and $f_0$ belong to the unit sphere of  $R(P_0)$. Then there exists a unitary operator  $V_0$ in $\u(R(P_0))$ such that $V_0f_0=U^*f$. $V_0$ can be extended to a unitary operator $V$ in $\h$, defining it as the identity on $R(P_0)^\perp$. Note that $V$ commutes with $P_0$.
Then $W=UV$ satisfies
$$
Wf_0=UVf_0=UV_0f_0=UU^*f=f \hbox{ and } WP_0W^*=UVP_0V^*U^*=UP_0U^*=P.
$$ 
The same procedure provides continuous local cross sections for $\rho_{(P_0,f_0)}$. Consider the 
open subset of $\p(\h)$:
$$
\{P\in\p(\h): \|P-P_0\|<1\}.
$$
It is well known that if $\|P-P_0\|<1$ then there exists $\mu_{P_0}(P)$ a unitary operator which is continuous (and smooth) in the parameter $P$, such that 
$$
\mu_{P_0}(P_0)=1 \hbox{ and } \mu_{P_0}(P)P_0\mu_{P_0}^*(P)=P.
$$
Then one has that the following set 
$$
\b_{(P_0,f_0)}=\{(P,f)\in\r: \|P-P_0\|<1 \hbox{ and } \|\mu_{P_0}(P)^*f-f_0\|<2\}
$$
is a open subset of $\r$ (and of $\r_{P_0}$). Moreover, for any pair of unit vectors $\xi,\eta$ in an arbitrary Hilbert space $\h$, satisfying that $\|\xi-\eta\|<2$, there exists $\nu_{\xi,\eta}\in\u(\h)$, which is a continuous and smooth map in terms of $\xi$ and $\eta$,  such that $\nu_{\xi,\eta}\xi=\eta$. Explicit formulas for $\nu$ can be obtained in several ways. For instance, if  $x,y,z\in\h$, and  $(x\otimes y)(z)=\langle z,y\rangle x$  denotes the elementary rank one operator, we  use  $\nu=\nu_{\xi,\eta}$ the unitary operator given by rotation in the plane generated by $\xi,\eta$
$$
\nu  =1+(c-1)\xi\otimes\xi +(\overline{c}-1)\xi'\otimes\xi'+s(\xi'\otimes\xi-\xi\otimes\xi')=\left(\begin{array}{ccc}
c & s& 0 \\
-s & \overline{c} &0\\
0& 0& 1
\end{array}
\right),
$$
where $c=\langle\eta,\xi\rangle$, $s=\sqrt{1-|c|^2}>0$ (assuming $\|\xi-\eta\|<2$), and $\xi'=\frac{1}{s}(\eta-c\xi)$.

\bigskip

If $(P,f)\in\b_{(P_0,f_0)}$, then $f_0$ and $\mu_{P_0}(P)^*f$ are unit vectors in $R(P_0)$. Let $V_{(P_0,f_0)}(P,f)$ be the unitary operator which acts as $\nu_{f_0,\mu_{P_0}(P)^*f}$ on $R(P_0)$ and as the identity in $R(P_0)^\perp$. Then 
$$
\sigma_{(P_0,f_0)}:\b_{(P_0,f_0)}\to \u(\h), \ \ \sigma_{(P_0,f_0)}(P,f)=\mu_{P_0}(P)V_{(P_0,f_0)}(P,f)
$$
is a continuous local cross section for $\rho_{(P_0,f_0)}.$ 
\end{proof}
\begin{coro}
The connected components of $\r$ are $\r_{P_0}$, $P_0\in\p(\h)$, where two projections $P_0$ and $P_1$ define the same component if  and only if they are unitarily equivalent. 
\end{coro}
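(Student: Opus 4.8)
The plan is to deduce the corollary directly from Proposition \ref{secloc}. The key point is that $\rho_{(P_0,f_0)}\colon\u(\h)\to\r_{P_0}$ is surjective (transitivity) and admits continuous local cross sections, hence is an open map; combined with the connectedness of $\u(\h)$ in the norm topology, this will force each $\r_{P_0}$ to be connected, and the cross sections will also show that each $\r_{P_0}$ is open in $\r$.

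First I would recall that the unitary group $\u(\h)$ is connected in the operator norm topology: every unitary is $e^{iA}$ for some bounded self-adjoint $A$ (spectral theorem), so $t\mapsto e^{itA}$ is a norm-continuous path from $1$ to $U$. Consequently, given $f_0\in R(P_0)$ with $\|f_0\|=1$ — such an $f_0$ exists as soon as $R(P_0)\neq\{0\}$; the degenerate case $P_0=0$ gives $\r_{P_0}=\emptyset$ and is vacuous — the image $\r_{P_0}=\rho_{(P_0,f_0)}(\u(\h))$ is the continuous image of a connected set, hence connected. Next I would show $\r_{P_0}$ is open in $\r$: by Proposition \ref{secloc} each point of $\r_{P_0}$ has a neighbourhood $\b_{(P_0',f_0')}$ in $\r$ on which a cross section is defined, and such a neighbourhood consists of pairs $(P,f)$ with $P$ unitarily equivalent to $P_0'$ (being of the form $UP_0'U^*$), hence unitarily equivalent to $P_0$; thus $\b_{(P_0',f_0')}\subseteq\r_{P_0}$. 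So $\r$ is partitioned into the nonempty open sets $\r_{P_0}$, indexed by unitary equivalence classes of projections.

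Then I would assemble the conclusion. The sets $\r_{P_0}$, as $P_0$ ranges over representatives of the unitary equivalence classes, form a partition of $\r$ into sets that are each open and connected. A partition of a topological space into open connected sets is exactly its decomposition into connected components. Two projections $P_0,P_1$ give the same set $\r_{P_0}=\r_{P_1}$ if and only if they are unitarily equivalent — one direction is the definition of $\r_{P_0}$, and for the converse, if $(P,f)\in\r_{P_0}\cap\r_{P_1}$ then $P$ is unitarily equivalent to both $P_0$ and $P_1$, hence $P_0$ and $P_1$ are unitarily equivalent to each other. This yields the stated description, including the characterization of when two projections index the same component (equivalently, by the well-known criterion recalled before the proposition, when $\dim R(P_0)=\dim R(P_1)$ and $\dim N(P_0)=\dim N(P_1)$).

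I do not expect a genuine obstacle here; the only mild subtlety is being careful that a cross-section neighbourhood of a point $(P,f)\in\r_{P_0}$ is taken around that very point — i.e. one uses $\b_{(P',f')}$ with $(P',f')=(P,f)$, which lies in $\r_{P_0}=\r_{P'}$ — so that openness is verified at every point of the component, not merely near the base point $(P_0,f_0)$. Everything else is a formal consequence of "surjective open continuous map from a connected space" together with the partition structure.
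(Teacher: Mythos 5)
Your argument is correct and is exactly the one the paper intends: the corollary is stated as a direct consequence of Proposition \ref{secloc}, with the local cross sections giving openness of each orbit $\r_{P_0}$ and the norm-connectedness of $\u(\h)$ giving connectedness, so that the orbits form a partition into open connected sets. Your handling of the degenerate case $P_0=0$ and of the ``same component iff unitarily equivalent'' equivalence matches what the paper leaves implicit.
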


\section{Regular structure}\label{sec3}

Let us prove that  $\r$ is a $C^\infty$ differentiable manifold, and that the map $\pi_\r$ is a  $C^\infty$ fibre bundle. In order to establish this assertion, the following lemma will be useful (see \cite{rae}).
\begin{lem}\label{raeburn}
Let $G$ be a Banach-Lie group acting smoothly on a Banach space $X$. For a fixed
$x_0\in X$, denote by $\rho_{x_0}:G\to X$ the smooth map $\rho_{x_0}(g)=g\cdot
x_0$. Suppose that
\begin{enumerate}
\item
$\rho_{x_0}$ is an open mapping,  regarded as a map from $G$ onto the orbit
$\{g\cdot x_0: g\in G\}$ of $x_0$ (with the relative topology of $X$).
\item
The differential $d(\rho_{x_0})_1:(TG)_1\to X$ splits: its nullspace and range are
closed complemented subspaces.
\end{enumerate}
Then the orbit $\{g\cdot x_0: g\in G\}$ is a smooth (analytic) submanifold of  $X$, and the
map
$$
\rho_{x_0}:G\to \{g\cdot x_0: g\in G\}
$$ 
is a smooth submersion.
\end{lem}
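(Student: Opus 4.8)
The plan is to produce, around $x_0$, a chart of $X$ in which the orbit $\o=\{g\cdot x_0:g\in G\}$ appears as a split linear subspace, and then to use hypothesis $(1)$ to see that this slice is in fact a \emph{neighbourhood} of $x_0$ in $\o$ for the subspace topology. First I would fix notation: write $N=\ker d(\rho_{x_0})_1\subseteq(TG)_1$ and $R=\operatorname{ran} d(\rho_{x_0})_1\subseteq X$, and by hypothesis $(2)$ choose closed complements, $(TG)_1=N\oplus\mathfrak m$ and $X=R\oplus S$. Then $d(\rho_{x_0})_1|_{\mathfrak m}\colon\mathfrak m\to R$ is a continuous linear bijection, hence a topological isomorphism by the open mapping theorem. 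I would also record the key observation that $\exp(n)\cdot x_0=x_0$ for every $n\in N$: writing $\phi_g$ for the smooth map $x\mapsto g\cdot x$ and $c(t)=\rho_{x_0}(\exp(tn))$, the identity $c(t_0+s)=\phi_{\exp(t_0n)}(c(s))$ together with the chain rule gives $c'(t_0)=(d\phi_{\exp(t_0n)})_{x_0}\big(d(\rho_{x_0})_1(n)\big)=0$ for all $t_0$, so $c$ is constant; in particular $\exp(N)$ lies in the stabiliser $H=\{g:g\cdot x_0=x_0\}$.

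Next I would construct the local model. On a small neighbourhood $V$ of $0$ in $\mathfrak m$ set $\theta(m)=\exp(m)\cdot x_0=\rho_{x_0}(\exp(m))$; this is smooth, $\theta(0)=x_0$, and $d\theta_0=d(\rho_{x_0})_1|_{\mathfrak m}$ is an isomorphism onto $R$. Consider the auxiliary map $\Psi(m,s)=\theta(m)+s$ for $m\in V$ and $s$ near $0$ in $S$: its differential at the origin, $(u,v)\mapsto d\theta_0(u)+v$, is a topological isomorphism of $\mathfrak m\oplus S$ onto $R\oplus S=X$, so by the inverse function theorem for Banach manifolds, after shrinking $V$ to $V_0$, $\Psi$ is a diffeomorphism onto an open neighbourhood of $x_0$ in $X$. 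Hence $\theta(V_0)=\Psi(V_0\times\{0\})$ is a $C^\infty$ split submanifold of $X$ and $\theta\colon V_0\to\theta(V_0)$ is a diffeomorphism. (This is just the local immersion theorem, and it is the step that genuinely uses that $R$ is complemented.)

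Now I would bring in hypothesis $(1)$. The map $\mu(m,n)=\exp(m)\exp(n)$ has differential $(u,w)\mapsto u+w$ at the origin, hence is, after further shrinking, a diffeomorphism of a neighbourhood $V_0\times V_N$ of $(0,0)$ in $\mathfrak m\oplus N$ onto an open neighbourhood $U$ of $1$ in $G$. By the observation of the first paragraph, $\rho_{x_0}(\mu(m,n))=\phi_{\exp(m)}(\exp(n)\cdot x_0)=\theta(m)$, so $\rho_{x_0}(U)=\theta(V_0)$; since $\rho_{x_0}$ is an open map onto $\o$, the set $\theta(V_0)$ is an open neighbourhood of $x_0$ in $\o$. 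Thus near $x_0$ the orbit coincides with the submanifold $\theta(V_0)$, and $\exp\circ(\theta|_{V_0})^{-1}$ is a smooth local section of $\rho_{x_0}$ through $1$. Finally, for any $g_0\in G$ the diffeomorphism $\phi_{g_0}$ of $X$ carries this picture to $g_0\cdot x_0$: $\o$ near $g_0\cdot x_0$ equals $\phi_{g_0}(\theta(V_0))$, with parametrisation $\phi_{g_0}\circ\theta$ and local section $L_{g_0}\circ\exp\circ(\theta|_{V_0})^{-1}\circ\phi_{g_0}^{-1}$; the transition maps between these parametrisations are smooth by a routine computation, so $\o$ is a smooth submanifold of $X$, the map $\rho_{x_0}\colon G\to\o$ is smooth (it is smooth into $X$ and lands in a split submanifold) and admits smooth local sections around every point, hence it is a submersion --- equivalently, its differential at $1$, $d(\rho_{x_0})_1\colon\mathfrak m\oplus N\to R=(T\o)_{x_0}$, is onto with complemented kernel $N$, and equivariance propagates this to every point. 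The analytic case is identical when the action is real analytic.

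I expect the main obstacle to be the inverse function theorem step, which is precisely where hypothesis $(2)$ --- especially the complementedness of the range $R$ --- is indispensable: without it the slice $\theta(V_0)$ would be merely an immersed subset of $X$ rather than a submanifold with split charts. The second delicate point is that $\o$ carries the subspace topology inherited from $X$, which may a priori be strictly coarser than the quotient topology of $G/H$; hypothesis $(1)$ is exactly what is needed to upgrade ``$\theta(V_0)\subseteq\o$ and $x_0\in\theta(V_0)$'' to ``$\theta(V_0)$ is a neighbourhood of $x_0$ in $\o$'', and without it $\rho_{x_0}$ need not even be a submersion onto the orbit.
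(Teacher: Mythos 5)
The paper does not prove this lemma at all: it is quoted from Raeburn's paper \cite{rae} and used as a black box, so there is no in-text argument to compare against. Your proof is correct and is essentially the standard one from that reference (and from Belti\c{t}\u{a}'s book): split off a complement $\mathfrak m$ of the kernel, show $\exp(N)$ stabilises $x_0$, use the inverse function theorem on $\Psi(m,s)=\exp(m)\cdot x_0+s$ to get a split slice, and invoke the openness hypothesis to identify that slice with a neighbourhood of $x_0$ in the orbit --- correctly isolating the two places where hypotheses (2) and (1), respectively, are indispensable.
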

Denote by $\b_{ah}(\h)=\{X\in\b(\h): X^*=-X\}$, the space of anti-Hermitian operators, which is the Banach-Lie algebra of $\u(\h)$, and let $\b_{h}(\h)=\{X\in\b(\h): X^*=X\}$ be the space of selfadjoint (or Hermitian) operators. 

\begin{prop}\label{smooth}
$\r$ is a $C^\infty$ submanifold of $\b(\h)\times\h$. For any $(P_0,f_0)\in\r$, the map 
$$
\rho_{(P_0,f_0)}:\u(\h)\to\r_{P_0} 
$$
is a  $C^\infty$-submersion, and in fact all manifolds and maps are real analytic.
\end{prop}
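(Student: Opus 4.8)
The plan is to verify the two hypotheses of Lemma~\ref{raeburn} for the smooth action of $G=\u(\h)$ on the Banach space $X=\b(\h)\times\h$, with base point $x_0=(P_0,f_0)$, and then transfer the submanifold/submersion conclusion from the ambient orbit to $\r$. The smoothness (indeed real analyticity) of the action $U\cdot(P,f)=(UPU^*,Uf)$ is clear since it is built from the bilinear multiplication and the inversion (here adjoint) on $\b(\h)$ together with the linear application to $\h$. So the work is concentrated in conditions (1) and (2) of the lemma.

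For condition (1): the orbit of $(P_0,f_0)$ under $\u(\h)$ is exactly $\r_{P_0}$ by the transitivity part of Proposition~\ref{secloc}. That $\rho_{(P_0,f_0)}$ is an open map onto $\r_{P_0}$ (with the relative topology inherited from $\b(\h)\times\h$) follows from the existence of continuous local cross sections established in Proposition~\ref{secloc}: if $\sigma_{(P_0,f_0)}:\b_{(P_0,f_0)}\to\u(\h)$ is a continuous section with $\rho_{(P_0,f_0)}\circ\sigma_{(P_0,f_0)}=\mathrm{id}$, then for any open $V\subseteq\u(\h)$ and any point $\rho_{(P_0,f_0)}(U)\in\rho_{(P_0,f_0)}(V)$, translating the section around the point $U\cdot(P_0,f_0)$ (i.e.\ using $W\mapsto W\,\sigma_{W^{-1}U\cdot(P_0,f_0)}(\cdot)$ type arguments, or simply covering $\rho_{(P_0,f_0)}(V)$ by the preimages under the sections) exhibits a neighbourhood of $\rho_{(P_0,f_0)}(U)$ inside $\rho_{(P_0,f_0)}(V)$. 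This is the standard ``local sections $\Rightarrow$ open map'' argument for group actions.

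For condition (2): the differential at $1$ is $d(\rho_{(P_0,f_0)})_1(X)=(XP_0-P_0X,\,Xf_0)$ for $X\in\b_{ah}(\h)$. Its nullspace is $\mathfrak{g}=\{X=-X^*:XP_0=P_0X,\,Xf_0=0\}$, the Lie algebra of the isotropy group, which is visibly closed. To see it is complemented and that the range is closed and complemented, I would decompose $\h$ according to $P_0$ and the line spanned by $f_0$: write $\h=\langle f_0\rangle\oplus(R(P_0)\ominus\langle f_0\rangle)\oplus N(P_0)$ and express an anti-Hermitian $X$ as a $3\times3$ block matrix; then $XP_0-P_0X$ only involves the off-diagonal blocks between $R(P_0)$ and $N(P_0)$, while $Xf_0$ only involves the first column. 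A direct bookkeeping identifies a closed complement of $\mathfrak g$ in $\b_{ah}(\h)$ (e.g.\ the operators whose nonzero blocks are precisely those ``seen'' by the differential, made anti-Hermitian), and shows $d(\rho_{(P_0,f_0)})_1$ restricted to that complement is a bounded linear isomorphism onto a closed subspace of $\b(\h)\times\h$ which is itself complemented (its complement being the block positions not hit). This makes the range closed and complemented.

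Granting both conditions, Lemma~\ref{raeburn} gives that $\r_{P_0}$ is a real-analytic submanifold of $\b(\h)\times\h$ and that $\rho_{(P_0,f_0)}:\u(\h)\to\r_{P_0}$ is a real-analytic submersion. Since by the Corollary the $\r_{P_0}$ are exactly the connected components of $\r$, and each is open and closed in $\r$, the submanifold atlas on the pieces assembles to a real-analytic submanifold structure on all of $\r$ inside $\b(\h)\times\h$, with $T\r_{(P_0,f_0)}=\mathrm{ran}\,d(\rho_{(P_0,f_0)})_1=\{(XP_0-P_0X,Xf_0):X=-X^*\}$ as claimed in the introduction; analyticity of $\pi_\r$ is then immediate since it is the restriction of the analytic projection $\b(\h)\times\h\to\b(\h)$. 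The main obstacle is the block-matrix bookkeeping in step~(2): one must be careful that the splitting respects the anti-Hermitian constraint and that the ``missing'' first-column/off-diagonal interplay between the constraints $Xf_0=0$ and $XP_0=P_0X$ is handled so that the chosen complement of $\mathfrak g$ genuinely maps isomorphically onto a \emph{closed} complemented subspace — the potential subtlety being that $f_0\in R(P_0)$ couples the two constraints.
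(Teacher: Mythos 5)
Your overall strategy is exactly the paper's: verify the two hypotheses of Lemma \ref{raeburn}, deducing openness of $\rho_{(P_0,f_0)}$ from the continuous local cross sections of Proposition \ref{secloc}, and then splitting the kernel and range of $\delta_{(P_0,f_0)}(X)=(XP_0-P_0X,Xf_0)$. The openness step and the description (and complementation) of the nullspace match the paper. The gap is in the complementation of the range, which is the real technical content of the proof and which you defer to ``direct bookkeeping''. The one concrete description you offer --- that the complement of the range consists of ``the block positions not hit'' --- is false, precisely because of the coupling you flag at the end: writing $X=\left(\begin{smallmatrix} a & b\\ -b^* & d\end{smallmatrix}\right)$ with respect to $P_0$, the range is
$$
\Bigl\{\,(Z,h)\ :\ Z=Z^*\ \hbox{$P_0$-codiagonal},\ {\rm Re}\langle h,f_0\rangle =0,\ P_0^\perp h=P_0^\perp Z f_0\,\Bigr\},
$$
so the $N(P_0)$-component of the vector part is \emph{determined} by the operator part: for instance $(Z,0)$ with $Z$ codiagonal and $P_0^\perp Zf_0\ne 0$ lies in the span of the ``hit'' block positions but is not in the range. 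The range is a graph over a product of coordinate subspaces, hence still closed and complemented, but to conclude you must exhibit a bounded idempotent onto it.

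The paper does this explicitly with
$$
\mathcal{E}(X,h)=\bigl(P_0^\perp XP_0+P_0XP_0^\perp,\ i\,{\rm Im}\langle h,f_0\rangle f_0+(P_0-f_0\otimes f_0)h+P_0^\perp Xf_0\bigr),
$$
defined on $\b_h(\h)\times\h$ (which suffices, since $\b_h(\h)\times\h$ is complemented in $\b(\h)\times\h$); note the cross term $P_0^\perp Xf_0$ in the second coordinate, which is exactly what a ``coordinate block'' complement misses. Checking that $\mathcal{E}$ actually lands inside the tangent space needs one further small ingredient you do not mention: given $g$ with ${\rm Re}\langle g,f_0\rangle=0$ there is an anti-Hermitian $z$ on $R(P_0)$ with $zf_0=g$, e.g.\ $z=g\otimes f_0+\langle f_0,g\rangle f_0\otimes f_0-f_0\otimes g$. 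With $\mathcal{E}$ (or an equivalent explicit construction) supplied, the rest of your argument --- assembling the components $\r_{P_0}$ into an atlas for $\r$ and reading off analyticity --- goes through as in the paper.
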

\begin{proof}
By Proposition \ref{secloc}, $\rho=\rho_{(P_0,f_0)}$  has a local continuous cross section $\sigma$ such that \\ $\sigma((P_0,f_0))=1$, and $\rho\circ\sigma=id_V$ where $V\subset \r_{P_0}$ is an open neighbouhood of $(P_0,f_0)$. Let $Z\subset \u(\h)$ be an open neighbourhood of $1$, note that $\rho^{-1}(\rho(Z))=ZK$ where $K$ is the isotropy group of $\rho$. Therefore
$$
\rho(Z)=\sigma^{-1}\rho^{-1}\rho(Z)=\sigma^{-1}(ZK)
$$   
is open in $\r_{P_0}$ since the section $\sigma$ is continuous and $ZK=\cup_{k\in K} Zk$ is open in $\u(\h)$. This proves that $\rho$ is locally open around $1\in\u(\h)$; by the  transitivity of the action of $\rho$ in $\r_{P_0}$, it follows that $\rho:\u(\h)\to\r_{P_0}$ is an open mapping.

Now by elementary computations,  $\delta_{(P_0,f_0)}=d(\rho_{(P_0,f_0)})_1:\b_{ah}(\h)\to \b(\h)\times \h$
is given by
$$
\delta_{(P_0,f_0)}(X)=(XP_0-P_0X,Xf_0).
$$
The nullspace of $\delta_{(P_0,f_0)}$ consists of anti-Hermitian operators $X$ which commute with $P_0$ and satisfy $Xf_0=0$. Since $f_0\in R(P_0)$, this set consists of $2\times 2$ matrices in terms of the decomposition $\h=R(P_0)\oplus N(P_0)$ which are of the form
$$
X=\left( \begin{array}{cc} X_{11} & 0 \\ 0 & X_{22} \end{array} \right),
$$
with $X_{ii}$ anti-Hermitian and $X_{11}f_0=0$. Furthermore, $X_{11}$ can be written as a matrix in terms of the decomposition $R(P_0)=<f_0>\oplus <f_0>^\perp$,
$$
X_{11}=\left( \begin{array}{cccc} 0 & 0 &  0 & \dots \\ 0 & x'_{11} & x'_{12} & \dots \\ 0 & x'_{21} & x'_{22} & \dots \\  \vdots & \vdots & \vdots & \ddots \end{array} \right) =\left( \begin{array}{cc} 0 & 0 \\ 0 & X' \end{array} \right),
$$
where $X'$ is an  anti-Hermitian operator acting in $R(P_0)\ominus <f_0>$.
A natural supplement for the nullspace $\delta_{(P_0,f_0)}$, as matrices in the decomposition 
$$
\h=<f_0>\oplus (R(P_0)\ominus<f_0>) \oplus N(P_0)
$$
is the set of matrices of the form
\begin{equation}\label{nucleoparaerre}
\left( \begin{array}{cc}
\begin{array}{cc}  i  t & \vec{f} \\  -\vec{\bar{f}}^t  & 0 \end{array}   &  Y \\  -Y^*    & 0 \end{array} \right),
\end{equation}
with $t\in\mathbb{R}$.

The range of $\delta_{(P_0,f_0)}$ is given by
$$
(T\r)_{(P_0,f_0)}=\{ \,  (XP_0-P_0X,Xf_0)\,   : \, X=-X^* \, \} \subseteq \b_h(\h)\times\h. 
$$
To prove that this subspace is complemented in $\b(\h)\times\h$, it is sufficient to prove that
it is complemented in $\b_h(\h)\times\h$. This can be done using the projection $\mathcal{E}: \b_h(\h)\times\h \to \b_h(\h) \times \h$ defined by
\begin{equation}\label{E}
\mathcal{E}((X,h))=(P_0^\perp XP_0+P_0XP_0^\perp, i\text{Im}<h,f_0>f_0 + (P_0 - (f_0 \otimes f_0) )h + P_0^\perp Xf_0).
\end{equation}
Clearly, $\mathcal{E}$ is continuous, linear, and it is not difficult to check that $\mathcal{E}^2=\mathcal{E}$ and 
$$
\mathcal{E}((XP_0-P_0X,Xf_0))=(XP_0-P_0X,Xf_0)
$$
for any anti-Hermitian operator $X$. Let us prove that the range of $\mathcal{E}$ is contained in $(T\r)_{(P_0,f_0)}$. Given $X=X^*$, $h \in \h$, we have to construct an operator $Y=-Y^*$ such that $\mathcal{E}((X,h))=((YP_0-P_0Y,Yf_0))$. First we use the following fact: given vectors $f, g$ in a Hilbert space $\l$, with $\|f\|=1$ there is an operator $z \in \b_{ah}(\l)$ such that $zf=g$ if and only if 
$<g,f>=-<f,g>$. In this case, the operator $z$ can be taken as  
$$
z=g\otimes f  + <f,g>f \otimes f-f\otimes g.
$$
Using this fact with $f=f_0$ and $g=i\text{Im}<h,f_0>f_0 + (P_0 - (f_0 \otimes f_0) )h$, we have an operator $z \in \b_{ah}(R(P_0))$ satisfying $zf_0= i\text{Im}<h,f_0>f_0 + (P_0 - (f_0 \otimes f_0) )h$.
Now suppose  that the matrix  of $X$  with respect to $P_0$ is
$$
X=\begin{pmatrix}  x_{11}  & x_{12} \\ x_{12}^*  &  x_{22} \end{pmatrix},
$$
then take 
$$
Y=\begin{pmatrix}  z  & -x_{12} \\ x_{12}^*  & 0 \end{pmatrix}.
$$
It follows that $YP_0-P_0Y=P_0^\perp XP_0+P_0XP_0^\perp$, and
\begin{align*}
Yf_0 & = P_0Y P_0 f_0 +  P_0^\perp Y P_0 f_0  =   z f_0 + P_0^\perp Xf_0\\
& = i\text{Im}<h,f_0>f_0 + (P_0 - (f_0 \otimes f_0) )h + P_0^\perp Xf_0. 
\end{align*}
This proves that  $\mathcal{E}$ is a projection with range $(T\r)_{(P_0,f_0)}$.
\end{proof}

\begin{rem}
The range of $\delta_{(P_0,f_0)}$ consists of pairs $(XP_0-P_0X,Xf_0)$, the left hand part of this pair is a selfadjoint operator which is co-diagonal with respect to $P_0$, $P_0ZP_0=P_0^\perp ZP_0^\perp=0$. It is easy to see that any selfadjoint operator  with this property is of the form $[X,P_0]$ for some $X^*=-X$. Indeed, the conditions are equivalent to $Z=ZP_0+P_0Z$ as seen from the computation
$$
Z=ZP_0+(P_0^{\perp}+P_0)ZP_0^{\perp}=ZP_0+P_0^{\perp}ZP_0^{\perp}+P_0ZP_0^{\perp}=ZP_0+0+P_0Z-P_0ZP_0=ZP_0+P_0Z.
$$
Therefore if we take $X=ZP_0-P_0Z=[Z,P_0]$, then clearly $X^*=-X$ and $[X,P_0]=ZP_0+P_0Z=Z$ as claimed. Note that $X$ is not unique, it can be modified by adding to it any skew-adjoint operator commuting with $P_0$.
\end{rem}

\medskip

\begin{prop}
The map 
$$
\pi_\r:\r\to\p(\h), \, \, \, \pi_\r(P,f)=P 
$$
is a  locally trivial fibre bundle. 
\end{prop}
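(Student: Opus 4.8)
The plan is to build a local trivialization of $\pi_\r$ over each basic open set $\{P\in\p(\h):\|P-P_0\|<1\}$, using the unitaries $\mu_{P_0}(P)$ already constructed in Proposition \ref{secloc}. First I would fix $P_0\in\p(\h)$, set $\u_0=\{P\in\p(\h):\|P-P_0\|<1\}$, and recall that $P\mapsto\mu_{P_0}(P)$ is a smooth map on $\u_0$ into $\u(\h)$ with $\mu_{P_0}(P_0)=1$ and $\mu_{P_0}(P)P_0\mu_{P_0}(P)^*=P$. The fibre of $\pi_\r$ over $P$ is the unit sphere $S(R(P))$ of the range of $P$, and conjugation by $\mu_{P_0}(P)$ carries $S(R(P_0))$ isometrically onto $S(R(P))$. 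This suggests the trivialization
$$
\Phi: \u_0\times S(R(P_0))\to \pi_\r^{-1}(\u_0),\qquad \Phi(P,\xi)=(P,\mu_{P_0}(P)\xi),
$$
with inverse $\Phi^{-1}(P,f)=(P,\mu_{P_0}(P)^*f)$. I would then check that $\Phi$ is a well-defined bijection onto $\pi_\r^{-1}(\u_0)$ that commutes with the projections to $\u_0$, i.e. $\pi_\r\circ\Phi=\mathrm{pr}_1$.

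The next step is smoothness. Since $\r$ is a $C^\infty$ (indeed real analytic) submanifold of $\b(\h)\times\h$ by Proposition \ref{smooth}, and $\pi_\r^{-1}(\u_0)$ is an open subset of $\r$, it suffices to see that $\Phi$ and $\Phi^{-1}$ are smooth as maps into, respectively out of, $\b(\h)\times\h$. Smoothness of $\Phi$ is clear: it is the restriction of the smooth map $(P,\xi)\mapsto(P,\mu_{P_0}(P)\xi)$ on $\u_0\times R(P_0)$, and $S(R(P_0))$ is a $C^\infty$ submanifold of the Hilbert space $R(P_0)$ (the unit sphere of a Hilbert space). For $\Phi^{-1}$, the map $(P,f)\mapsto(P,\mu_{P_0}(P)^*f)$ is smooth on the open set $\u_0\times\h$; one then observes that it sends $\pi_\r^{-1}(\u_0)$ into $\u_0\times S(R(P_0))$, because $f\in R(P)$ forces $\mu_{P_0}(P)^*f\in R(P_0)$ and $\|\mu_{P_0}(P)^*f\|=\|f\|=1$. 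Restricting a smooth map between ambient spaces to submanifolds (using that $\r$ and $\u_0\times S(R(P_0))$ are submanifolds) keeps it smooth, so both $\Phi$ and $\Phi^{-1}$ are $C^\infty$; the same argument with ``analytic'' in place of ``$C^\infty$'' gives real analyticity.

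Finally I would remark that the sets $\u_0$, as $P_0$ ranges over $\p(\h)$, form an open cover of $\p(\h)$, so $\pi_\r$ is locally trivial, hence a $C^\infty$ locally trivial fibre bundle with fibre the unit sphere of a Hilbert space; the fibre type is locally constant, equal to $S(R(P_0))$, and on each connected component $\r_{P_0}$ the fibre is the fixed sphere $S(R(P_0))$. The main (though mild) obstacle is the bookkeeping that identifies the image of $\Phi$ with $\pi_\r^{-1}(\u_0)$ and verifies that $\Phi^{-1}$ indeed lands in $\u_0\times S(R(P_0))$ — i.e. tracking the range and norm conditions $Pf=f$, $\|f\|=1$ under conjugation by $\mu_{P_0}(P)$ — together with the routine but slightly delicate point that restriction to the submanifolds $\r$ and $\u_0\times S(R(P_0))$ preserves smoothness, which follows from Proposition \ref{smooth} and the standard fact that the unit sphere of a Hilbert space is a smooth submanifold.
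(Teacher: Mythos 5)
Your proof is correct and follows essentially the same route as the paper: over the open set $\{P:\|P-P_0\|<1\}$ the paper uses exactly the map $(P,f)\mapsto(P,\mu_{P_0}(P)^*f)$ with inverse $(P,h)\mapsto(P,\mu_{P_0}(P)h)$ as the local trivialization. Your write-up merely supplies the routine smoothness and range-checking details that the paper leaves implicit.
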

\begin{proof}
Fix $(P_0,f_0)\in\r$, and consider the open subsets
$$
\v_{(P_0,f_0)}=\{(P,f):\in\r: \|P-P_0\|<1\} \hbox{ and } \v_{P_0}=\{P\in\p(\h): \|P-P_0\|<1\} 
$$ 
and the map
$$
\v_{(P_0,f_0)}\to \v_{P_0}\times \mathbb{S}(R(P_0)), (P,f)\mapsto (P,\mu_{P_0}^*(P)f),
$$
where $\mu_{P_0}$ is the $\u(\h)$-valued map defined in the proof of Proposition \ref{secloc}. Clearly $\mu_{P_0}^*(P)f\in R(P_0)$, and therefore this map is well defined, with inverse $(P,h)\mapsto (P,\mu_{P_0}(P)h)$, which is a local trivialization of the map $\pi_\r$.
\end{proof}

\section{The quotient metric in $\r$}\label{sec4}

In \cite{cocometricacociente} Dur\'an, Mata-Lorenzo and Recht presented a program to study homogeneous spaces which are quotients of the unitary group of a C$^*$-algebra by the unitary group of a sub-C$^*$-algebra. This program does not apply exactly to the homogeneous space $\r$, since the isotropy subalgebra of $\u(\h)$ is not the unitary group of a C$^*$-algebra. Nevertheless, the main ideas of their approach apply in our context with minor modifications. 

Dur\'an, Mata-Lorenzo and Recht start with a natural idea: since $\r$ is a quotient, the tangent space $(T\r)_{(P_0,f_0)}$ identifies with the quotient
$$
(T\r)_{(P_0,f_0)}\simeq \b_{ah}(\h) / \ker \delta_{(P_0,f_0)}=\b_{ah}(\h) / \{ X\in\b_{ah}(\h): [X,P_0]=0, Xf_0=0\}.
$$
This is a quotient of Banach spaces, so one endows it with the quotient norm: if $V\in (T\r)_{(P_0,f_0)}$
\begin{equation}\label{norma de R}
|V|_{(P_0,f_0)}=\inf\{ \|X\|: X\in\b_{ah}(\h),  \delta_{(P_0,f_0)}(X)=V\}.
\end{equation}
The  elements $X$ in $\b_{ah}(\h)$ such that $\delta_{(P_0,f_0)}(X)=V$, will be called {\it liftings} of $V\in (T\r)_{(P_0,f_0)}$. Let us define  minimal liftings:
\begin{defi}
Let $V\in (T\r)_{(P_0,f_0)}$. A lifting $X_0$ of $V$ is called a {\it minimal lifting} if it achieves the quotient norm, i.e.
$$
\|X_0\|=\inf\{\|X\|: X \hbox{ is a lifting of } V\}=|V|_{(P_0,f_0)}.
$$
\end{defi}
It can be proved that minimal liftings exist for any tangent vector. We shall postpone the proof of this fact, before let us further characterize minimal liftings.

Given a vector $V\in (T\r)_{(P_0,f_0)}$, by the form of the kernel of $\delta_{(P_0,f_0)}$ described in Section 3, it is clear that different liftings of $V$ have the following common form:
\begin{equation}\label{matriz}
X_*=\left( \begin{array}{cc}
\begin{array}{cc}  i x_0 & \vec{x} \\  -\vec{\bar{x}}^t  & * \end{array}   &  A \\  -A^*    & ** \end{array} \right),
\end{equation}
where $x_0\in\mathbb{R}$,  and  $*$ and $**$ are to be filled with anti-Hermitian operators in $R(P_0)\ominus <f_0>$ and $N(P_0)$, respectively. Therefore the problem of finding a minimal lifting consists in filling the spots $*$ and $**$ in order that the completed matrix has the least possible norm. This problem has certain resemblance to M.G. Krein's extension problem (see \cite{krein}, \cite[p. 336]{RN55} or \cite{DKW82}). In fact, as it will be shown next, it can be tackled using this method. In our context, Krein's extension problem consists in completing the $*$ spot in the matrix operator (in terms of the decomposition $\h=R(P_0)\oplus N(P_0)$)
$$
\left( \begin{array}{cc} B & A \\ -A^* & * \end{array} \right)
$$
in order that the matrix has the least possible norm. We shall divide our quest into two steps. 
\begin{enumerate}
\item
First fill the space $*$ in the first row of $X_*$:
\begin{equation}\label{fila1}
\left( \begin{array}{cc} \begin{array}{cc} i x_0 & \vec{x} \\ \vec{\bar{x}}^t & *  \end{array} \ \   \ \  A \end{array}  \right)
\end{equation}
with $*=Y_0$, in order that the row has the least possible norm.
\item
Next, fill the $**$ spot in the matrix
\begin{equation}\label{matriz2}
\left( \begin{array}{cc}
\begin{array}{cc}  i x_0 & \vec{x} \\  -\vec{\bar{x}}^t  & Y_0 \end{array}   &  A \\  -A^*    & ** \end{array} \right)
\end{equation}
\end{enumerate}
\begin{prop}
Suppose that  $Y_0$ and $Z_0$ are operators acting respectively in $R(P_0)\ominus <f_0>$ and $N(P_0)$, where $Y_0$ optimizes the norm of the row (\ref{fila1}), and $Z_0$ optimizes the norm of the matrix (\ref{matriz2}).  Then 
\begin{equation}\label{matriz posta}
X_0=\left( \begin{array}{cc}
\begin{array}{cc}  i x_0 & \vec{x} \\  -\vec{\bar{x}}^t  & Y_0 \end{array}   &  A \\  -A^*    & Z_0 \end{array} \right)
\end{equation}
is a minimal lifting. Any minimal lifting can be obtained in this fashion.
\end{prop}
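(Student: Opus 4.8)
My plan is to turn the two-step construction into two classical best-completion facts and to glue them with the trivial inequality $\|X_*\|\ge\|P_0X_*\|$. Throughout I fix $V\in(T\r)_{(P_0,f_0)}$ and use that, by the description of $\ker\delta_{(P_0,f_0)}$ in Section \ref{sec3}, every lifting of $V$ has the form (\ref{matriz}) with respect to $\h=\langle f_0\rangle\oplus(R(P_0)\ominus\langle f_0\rangle)\oplus N(P_0)$, the blocks $ix_0$, $\vec x$ and $A$ being common to all liftings and the anti-Hermitian blocks $*$ (on $R(P_0)\ominus\langle f_0\rangle$) and $**$ (on $N(P_0)$) being free. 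The two ingredients I would use are: (a) for any lifting $X_*$ one has $\|X_*\|\ge\|P_0X_*\|$, and $P_0X_*$ is exactly the block (\ref{fila1}), so it depends on $X_*$ only through the block $*$; (b) for a $2\times2$ anti-Hermitian operator matrix $\left(\begin{smallmatrix}B & A\\ -A^* & C\end{smallmatrix}\right)$ on $R(P_0)\oplus N(P_0)$ with $B^*=-B$ and $A$ fixed, Krein's extension theorem (\cite{krein,RN55,DKW82}) produces an anti-Hermitian $C=C_0$ realizing $\min_C\|\left(\begin{smallmatrix}B & A\\ -A^* & C\end{smallmatrix}\right)\|=\max\{\|(B\ A)\|,\|\binom{B}{-A^*}\|\}$, and since $B$ is anti-Hermitian (hence normal) $\|\binom{B}{-A^*}\|^2=\|B^*B+AA^*\|=\|BB^*+AA^*\|=\|(B\ A)\|^2$, so this minimum is simply $\|(B\ A)\|$.

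Put $m:=\inf\{\|P_0X\|:X\text{ is a lifting of }V\}$. As $X$ runs over the liftings, $P_0X$ runs exactly over the matrices (\ref{fila1}) with anti-Hermitian $*$, so the first step of the construction is precisely the minimization defining $m$; this infimum is attained by some anti-Hermitian $Y_0$ (either by a Krein-type completion result or, more cheaply, by weak-$*$ compactness: anti-Hermitian operators of norm at most that of a fixed lifting form a weak-$*$ compact set on which the norm is lower semicontinuous). Thus (\ref{fila1}) has norm $m$ when $*=Y_0$. For the lower bound, (a) gives $\|X\|\ge\|P_0X\|\ge m$ for every lifting $X$, hence $|V|_{(P_0,f_0)}\ge m$.

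For the upper bound I would set $B:=\left(\begin{smallmatrix}ix_0 & \vec x\\ -\vec{\bar{x}}^t & Y_0\end{smallmatrix}\right)$, which is anti-Hermitian since $Y_0$ is, and apply (b) to (\ref{matriz2}) viewed as a $2\times2$ operator matrix on $R(P_0)\oplus N(P_0)$ with free corner $**$: there is an anti-Hermitian $**=Z_0$ for which the norm of (\ref{matriz2}) equals $\|(B\ A)\|=\|P_0X_0\|=m$, where $X_0$ is the lifting (\ref{matriz posta}). Hence $\|X_0\|=m\le|V|_{(P_0,f_0)}\le\|X_0\|$ (the last step because $X_0$ is a lifting), so $X_0$ is a minimal lifting and $|V|_{(P_0,f_0)}=m$. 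For the converse, let $X'$ be any minimal lifting, with blocks $Y'$, $Z'$; then $m\le\|P_0X'\|\le\|X'\|=|V|_{(P_0,f_0)}=m$ forces $\|P_0X'\|=m$, so $Y'$ optimizes (\ref{fila1}) — a legitimate first-step choice — and then (b), applied with $B':=\left(\begin{smallmatrix}ix_0 & \vec x\\ -\vec{\bar{x}}^t & Y'\end{smallmatrix}\right)$, gives that the minimum over anti-Hermitian $**$ of the norm of the corresponding matrix (\ref{matriz2}) is $\|(B'\ A)\|=m=\|X'\|$, so $Z'$ attains it — a legitimate second-step choice. Thus $X'$ arises from the procedure. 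The two points I would flag as the real content are the attainment in the first step (I would handle it softly, by compactness, rather than through an explicit completion formula) and the identity $\|\binom{B}{-A^*}\|=\|(B\ A)\|$: it is exactly the anti-Hermiticity of the already-filled first row $B$ that collapses Krein's minimum in the second step to $\|P_0X_0\|$, so that the chain of inequalities telescopes and pins down $|V|_{(P_0,f_0)}=m$.
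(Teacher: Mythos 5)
Your argument is correct and follows essentially the same route as the paper: the lower bound $\|X\|\ge\|P_0X\|$ (norm of the first row is at most the norm of the full matrix), the fact that the optimal Krein completion makes the full matrix norm equal to the first-row norm, and the resulting telescoping chain of inequalities, including the same reasoning for the converse. Your explicit check that $\bigl\|\bigl(\begin{smallmatrix}B\\ -A^*\end{smallmatrix}\bigr)\bigr\|=\|(B\ A)\|$ via normality of the anti-Hermitian block $B$ is a welcome detail that the paper leaves implicit.
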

\begin{proof}
By hypothesis, for any anti-Hermitian operator $Y$ in $R(P_0)\ominus <f_0>$, one has that
$$
\left\|\left( \begin{array}{cc} \begin{array}{cc} i x_0 & \vec{x} \\ \vec{\bar{x}}^t & Y_0  \end{array} \ \   \ \  A \end{array}  \right)\right\| \le \left\|\left( \begin{array}{cc} \begin{array}{cc} i x_0 & \vec{x} \\ \vec{\bar{x}}^t & Y  \end{array} \ \   \ \  A \end{array}  \right)\right\|.
$$
Since $Z_0$ is a solution of Krein's extension problem, for our optimal choice of  $Z_0$ we have
$$
\left\|\left( \begin{array}{cc}
\begin{array}{cc}  i x_0 & \vec{x} \\  -\vec{\bar{x}}^t  & Y_0 \end{array}   &  A \\  -A^*    & Z_0 \end{array} \right)\right\|=
\left\|\left( \begin{array}{cc} \begin{array}{cc} i x_0 & \vec{x} \\ \vec{\bar{x}}^t & Y_0  \end{array} \ \   \ \  A \end{array}  \right)\right\|.
$$
On the other hand, for any block matrix operator, the norm of the first row is less or equal than the norm of the full matrix:
$$
\left\|\left( \begin{array}{cc} \begin{array}{cc} i x_0 & \vec{x} \\ \vec{\bar{x}}^t & Y  \end{array} \ \   \ \  A \end{array}  \right)\right\| \le 
\left\| \left( \begin{array}{cc}
\begin{array}{cc}  i x_0 & \vec{x} \\  -\vec{\bar{x}}^t  & Y \end{array}   &  A \\  -A^*    & Z \end{array} \right) \right\|,
$$
for any anti-Hermitian operator $Z$ in $N(P_0)$, which proves our first assertion.

Let $X_1$ be a minimal lifting, 
$$
\left( \begin{array}{cc}
\begin{array}{cc}  i x_0 & \vec{x} \\  -\vec{\bar{x}}^t  & Y_1 \end{array}   &  A \\  -A^*    & Z_1 \end{array} \right).
$$
Then, if $Y_0$ and $Z_0$ are obtained as above
$$
\left\|\left( \begin{array}{cc} \begin{array}{cc} i x_0 & \vec{x} \\ \vec{\bar{x}}^t & Y_1  \end{array} \ \   \ \  A \end{array}  \right)\right\|\le \left\|\left( \begin{array}{cc}
\begin{array}{cc}  i x_0 & \vec{x} \\  -\vec{\bar{x}}^t  & Y_1 \end{array}   &  A \\  -A^*    & Z_1 \end{array} \right)\right\|=
\left\|\left( \begin{array}{cc}
\begin{array}{cc}  i x_0 & \vec{x} \\  -\vec{\bar{x}}^t  & Y_0 \end{array}   &  A \\  -A^*    & Z_0 \end{array} \right)\right\|.
$$
This last norm equals 
$$
\left\|\left( \begin{array}{cc} \begin{array}{cc} i x_0 & \vec{x} \\ \vec{\bar{x}}^t & Y_0  \end{array} \ \   \ \  A \end{array}  \right)\right\|.
$$
From these inequalities it follows  that also $Y_1$ minimizes the norm of the first row, and therefore also $Z_1$ is a solution of Krein's extension problem for the matrix
$$
\left( \begin{array}{cc}
\begin{array}{cc}  i x_0 & \vec{x} \\  -\vec{\bar{x}}^t  & Y_1 \end{array}   &  A \\  -A^*    & ** \end{array} \right).
$$
\end{proof}
Therefore to obtain minimal liftings, it suffices to solve the first row problem (\ref{fila1}). Let us show that solutions to this problem always exist.
\begin{prop}
There exists an anti-Hermitian operator $Y_0$ in $R(P_0)\ominus <f_0>$ such that 
$$
\left\|\left( \begin{array}{cc} \begin{array}{cc} i x_0 & \vec{x} \\ \vec{\bar{x}}^t & Y_0  \end{array} \ \   \ \  A \end{array}  \right)\right\| \le \left\| \left( \begin{array}{cc} \begin{array}{cc} i x_0 & \vec{x} \\ \vec{\bar{x}}^t & Y  \end{array} \ \   \ \  A \end{array}  \right) \right\|
$$
for any other anti-Hermitian operator $Y$ in  $R(P_0)\ominus <f_0>$.
\end{prop}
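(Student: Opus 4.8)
The plan is a soft weak-operator compactness argument. Set $K:=R(P_0)\ominus <f_0>$ and, for $Y\in\b_{ah}(K)$, denote by $C_Y\colon\h\to R(P_0)$ the row operator displayed in (\ref{fila1}) with $*=Y$; thus $Y\mapsto C_Y$ is affine, its linear part simply inserting $Y$ in the $K\times K$ corner. The first and decisive step I would record is the a priori estimate $\|Y\|\le\|C_Y\|$, valid because $Y$ is the compression $Q_K C_Y\iota_K$ of $C_Y$, where $\iota_K\colon K\hookrightarrow\h$ is the inclusion (recall $K\subseteq R(P_0)\subseteq\h$) and $Q_K$ is the orthogonal projection of $R(P_0)$ onto $K$. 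This is exactly what forces minimizing families to stay in a fixed ball, making compactness applicable.

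Next I would set $m=\inf\{\|C_Y\|:Y\in\b_{ah}(K)\}$, which is finite since $Y=0$ is admissible, and pick a minimizing net $(Y_j)$. By the estimate above it eventually lies in the closed ball of radius $m+1$ of $\b(K)$, and that ball is compact in the weak operator topology; passing to a subnet, $Y_j\to Y_0$ weakly, with $\|C_{Y_j}\|\to m$ still holding along the subnet. Since the adjoint operation is continuous for the weak operator topology, $Y_0^*=-Y_0$, so $Y_0\in\b_{ah}(K)$ is itself an admissible competitor.

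Finally I would use that $C_{Y_j}\to C_{Y_0}$ in the weak operator topology (the map $Y\mapsto C_Y$ is affine with weak-operator continuous linear part) together with the weak-operator lower semicontinuity of the operator norm, which is the supremum over unit vectors $\xi,\eta$ of the continuous functionals $T\mapsto|\langle T\xi,\eta\rangle|$. Hence $\|C_{Y_0}\|\le\liminf_j\|C_{Y_j}\|=m$, and since $Y_0$ is admissible, $\|C_{Y_0}\|\ge m$ as well, so $\|C_{Y_0}\|=m$ and $Y_0$ solves the first row problem. I do not anticipate a genuine obstacle: the argument is entirely soft, and the one point that needs attention is that the norm is only lower semicontinuous (not continuous) for the weak operator topology, which causes no harm because all we need is that the limit $Y_0$ be a competitor whose row norm does not exceed $m$. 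As an alternative that would in addition identify $m$ explicitly, one can reformulate the row problem as a selfadjoint corner-completion problem for $\left( \begin{array}{cc} 0 & C_Y \\ C_Y^* & 0 \end{array}\right)$ and invoke Krein's extension theorem \cite{krein}; for bare existence, however, the compactness route is the shortest.
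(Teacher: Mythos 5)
Your proposal is correct and follows essentially the same route as the paper: the a priori bound $\|Y\|\le\|C_Y\|$ from compression, a minimizing family trapped in a weak-operator-compact ball, and passage to a weak-operator limit $Y_0$ which is again anti-Hermitian. Your concluding step via weak-operator lower semicontinuity of the norm of the full row $C_{Y_0}$ is in fact a cleaner way to finish than the paper's closing lines, which only estimate $\|Y_0\|$ itself.
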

\begin{proof}
Denote by $\iota$ the infimum of all possible completions of the first row matrix (\ref{fila1}).
Let $Y_n$ be a minimizing sequence of anti-Hermitian operators in  $R(P_0)\ominus <f_0>$:
$$
\left\|\left( \begin{array}{cc} \begin{array}{cc} i x_0 & \vec{x} \\ \vec{\bar{x}}^t & Y_n  \end{array} \ \   \ \  A \end{array}  \right)\right\|\to \iota.
$$
Note that 
$$
\|Y_n\|\le \left\|\left( \begin{array}{cc} \begin{array}{cc} i x_0 & \vec{x} \\ \vec{\bar{x}}^t & Y_n  \end{array} \ \   \ \  A \end{array}  \right)\right\|.
$$
Therefore the sequence $Y_n$ is norm bounded, and therefore it has a weak operator convergent subsequence $Y_{n_k}$,  $<Y_{n_k}\xi,\eta>\to <Y_0\xi,\eta>$. Clearly $Y_0$ is anti-Hermitian. We claim that $Y_0$ is a solution to the problem. Indeed, for any $\epsilon>0$, there exists a unit vector $\xi_\epsilon$ such that $|<Y_0\xi_\epsilon,\xi_\epsilon>|\ge \|Y_0\|-\epsilon$.
Then 
$$
\|Y_{n_k}\|\ge |<Y_{n_k}\xi_\epsilon,\xi_\epsilon>|\to   |<Y_0\xi_\epsilon,\xi_\epsilon>|\ge \|Y_0\|-\epsilon.
$$
Then $\iota\ge \|Y_0\|-\epsilon$ for any $\epsilon$, and thus $\iota=\|Y_0\|$.
\end{proof}
The following result is again adapted from \cite{cocometricacociente}, with minor modifications which  allow to treat the case when the isotropy group of the unitary action is an arbitrary Banach-Lie group (and not the unitary group of a sub-C$^*$-algebra).

\begin{prop}\label{representacion}
Let $X_0$ be a lifting of $V\in (T\r)_{(P_0,f_0)}$. Then $X_0$ is a minimal lifting if and only if there exists a  representation $\pi$ of $\b(\h)$ on a Hilbert space $\l$, and a unit vector $\xi\in\l$ such that
\begin{enumerate}
\item
$\pi(X_0^2)\xi=-\|X_0\|^2\xi$
\item
$Re<\pi(X_0)\xi, \pi(B)\xi>=0$, for all $B\in\ker \delta_{(P_0,f_0)}$.
\end{enumerate}
\end{prop}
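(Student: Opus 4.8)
The plan is to characterize minimality of the lifting $X_0$ as a norm-attainment condition and then translate it into a Hahn--Banach style separation statement, in the spirit of the classical description of distance from an operator to a subalgebra (and of the approach in \cite{cocometricacociente}). Recall that $X_0$ is minimal if and only if $\|X_0\| \le \|X_0 + B\|$ for every $B \in \ker\delta_{(P_0,f_0)}$; that is, $X_0$ realizes the distance from any of its translates to the subspace $\mathfrak{g} = \ker\delta_{(P_0,f_0)}$, equivalently $\mathrm{dist}(X_0,\mathfrak{g}) = \|X_0\|$ with the particular representative being $X_0$ itself (since $0 \in \mathfrak{g}$). So the whole statement is really a criterion for when a fixed operator $X_0$ satisfies $\|X_0\| \le \|X_0 + B\|$ for all $B$ in a closed real-linear subspace.

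For the ``if'' direction I would argue directly: given a representation $\pi$ and a unit vector $\xi$ with $\pi(X_0^2)\xi = -\|X_0\|^2 \xi$, the vector $\eta := \pi(X_0)\xi$ has $\|\eta\|^2 = \langle \pi(X_0^2)^*\xi,\xi\rangle$; using $X_0^* = -X_0$ one gets $\pi(X_0)^* = -\pi(X_0)$, hence $\|\eta\|^2 = -\langle \pi(X_0^2)\xi,\xi\rangle = \|X_0\|^2$. Then for any $B \in \mathfrak{g}$,
$$
\|X_0 + B\|^2 \ge \|\pi(X_0+B)\xi\|^2 = \|\pi(X_0)\xi\|^2 + 2\,\mathrm{Re}\langle \pi(X_0)\xi,\pi(B)\xi\rangle + \|\pi(B)\xi\|^2 \ge \|X_0\|^2,
$$
where the cross term vanishes by condition (2). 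This shows $X_0$ is minimal.

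The ``only if'' direction is the main obstacle and requires a compactness/weak-$*$ argument to produce $(\pi,\xi)$. I would proceed as follows. Assume $X_0$ is minimal. For each $n$, using $\|X_0\| = \sup_{\|u\|=1}\|X_0 u\|$ (and that the sup over the relevant behaviour can be approached), choose unit vectors $\xi_n \in \h$ with $\|X_0 \xi_n\| \to \|X_0\|$ and also $\|X_0^2\xi_n + \|X_0\|^2\xi_n\| \to 0$ — such a sequence exists because $\|X_0\|$ (or $i\|X_0\|$) lies in the spectrum of the normal-like operator $X_0$ acting on the complexification, so $-\|X_0\|^2$ lies in the spectrum of $X_0^2 \le 0$, and one picks approximate eigenvectors. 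Consider the state $\phi_n(T) = \langle T\xi_n,\xi_n\rangle$ on $\b(\h)$ and let $\phi$ be a weak-$*$ cluster point in the state space (which is weak-$*$ compact). Then $\phi(X_0^2) = -\|X_0\|^2$. The minimality of $X_0$, i.e. $\|X_0 + B\|^2 \ge \|X_0\|^2$ for all $B \in \mathfrak{g}$, passes to $\phi$ via $\phi((X_0+B)^*(X_0+B)) = \phi(X_0^*X_0) + 2\,\mathrm{Re}\,\phi(X_0^*B) + \phi(B^*B)$ and a scaling argument ($B \rightsquigarrow tB$, $t \to 0$) to force $\mathrm{Re}\,\phi(X_0^* B) = 0$, i.e. $\mathrm{Re}\,\phi(-X_0 B) = 0$ for all $B \in \mathfrak{g}$. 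Finally apply the GNS construction to $\phi$: let $(\pi,\l,\xi)$ be the GNS representation with cyclic vector $\xi$, so $\phi(T) = \langle \pi(T)\xi,\xi\rangle$. Then $\langle \pi(X_0^2)\xi,\xi\rangle = -\|X_0\|^2 = -\|\pi(X_0)\xi\|^2$ — wait, one checks $\|\pi(X_0)\xi\|^2 = \phi(X_0^*X_0) = \phi(-X_0^2) = \|X_0\|^2$ — and combined with $\|\pi(X_0)\xi\| = \|\pi(X_0)\|\,\|\xi\|$-type saturation this yields the eigen-equation $\pi(X_0^2)\xi = -\|X_0\|^2\xi$ (the Cauchy--Schwarz equality case: $\|\pi(X_0^2)\xi\| \le \|X_0\|^2$ while $\langle \pi(X_0^2)\xi,\xi\rangle = -\|X_0\|^2$ forces $\pi(X_0^2)\xi = -\|X_0\|^2\xi$). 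Condition (2) follows from $\mathrm{Re}\,\phi(-X_0 B) = 0$ once we note $\langle \pi(X_0)\xi,\pi(B)\xi\rangle = \langle \pi(B)^*\pi(X_0)\xi,\xi\rangle = \phi(B^*X_0) = \phi(-BX_0)$, and since both $X_0,B$ are anti-Hermitian, $\mathrm{Re}\,\phi(-BX_0) = \mathrm{Re}\,\phi(-X_0B) = 0$. The delicate point to get right is the joint approximate-eigenvector choice and the verification that the equality case of Cauchy--Schwarz upgrades the scalar identity to the vector identity in (1); I expect this to be the step needing the most care.
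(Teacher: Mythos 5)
Your ``if'' direction is correct and is exactly the paper's argument. The problem is the ``only if'' direction: the step where you claim that minimality ``passes to $\phi$'' via a scaling argument to force $\mathrm{Re}\,\phi(X_0^*B)=0$ does not work, and this is precisely the hard point of the proposition. Minimality gives you $\|X_0+tB\|^2\ge\|X_0\|^2$, i.e.\ a \emph{lower} bound on the norm, while the state only gives $\phi\bigl((X_0+tB)^*(X_0+tB)\bigr)\le\|X_0+tB\|^2$, an \emph{upper} bound by the norm. If $\mathrm{Re}\,\phi(X_0^*B)\ne 0$ you can indeed make $\phi\bigl((X_0+tB)^*(X_0+tB)\bigr)<\|X_0\|^2$ for small $t$ of the right sign, but this contradicts nothing: it is perfectly consistent with $\|X_0+tB\|^2\ge\|X_0\|^2$, since the state need not attain the norm of the perturbed element. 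Concretely, an arbitrary weak-$*$ cluster point of approximate-eigenvector states for $X_0^2$ can fail condition (2): take $X_0=\mathrm{diag}(i,-i)$ on $\mathbb{C}^2$ and $B=\mathrm{diag}(it,0)$ in the kernel; then $X_0$ is minimal ($\|X_0+B\|=\max(|1+t|,1)\ge 1$), every unit vector is an eigenvector of $X_0^2=-1$, yet the vector state at $e_1$ gives $\mathrm{Re}\langle X_0e_1,Be_1\rangle=t\ne 0$. So the state must be \emph{chosen} to see all of $\ker\delta_{(P_0,f_0)}$ simultaneously, not merely to witness the norm of $X_0$.

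The paper gets around this with a genuinely different construction: it forms the real subspace $\s\subset\b_h(\h)$ spanned by $X_0^2+\|X_0\|^2 1$ and the operators $X_0B+BX_0$, $B\in\ker\delta_{(P_0,f_0)}$, proves that minimality forces $\s\cap Gl(\h)^+=\emptyset$ (the key lemma being that $\|X_0+B\|\ge\|X_0\|$ for all $B$ implies $\|X_0^2+X_0B+BX_0\|\ge\|X_0\|^2$, via a convexity trick with $(X_0+tB)^*(X_0+tB)$, plus an upper semicontinuity of the spectrum argument), and then separates $\s$ from the open convex set $Gl(\h)^+$ by Hahn--Banach. The resulting functional is automatically a positive state vanishing on $\s$, and its GNS representation yields $(\pi,\xi)$; your Cauchy--Schwarz upgrade from $\langle\pi(X_0^2)\xi,\xi\rangle=-\|X_0\|^2$ to $\pi(X_0^2)\xi=-\|X_0\|^2\xi$ is then correct and is the same as in the paper. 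To repair your proof you would need to replace the approximate-eigenvector construction by some such separation (or minimax/compactness-over-all-states) argument; as written, the ``only if'' half is not established.
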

\begin{proof}
Suppose first that conditions 1. and 2. hold. Then for any $B\in\ker \delta_{(P_0,f_0)}$,
$$
\|X_0+B\|^2\ge \|\pi(X_0+B)\xi\|^2=\|\pi(X_0)\xi\|^2+\|\pi(B)\xi\|^2\ge \|\pi(X_0)\xi\|^2=-<\pi(X_0^2)\xi,\xi>=\|X_0\|^2.
$$
Conversely, suppose that $X_0$ is a minimal lifting. Let $\s$ be the real subspace of $\b_h(\h)$ spanned by
$X_0^2+\|X_0\|^21$ and the operators of the form $BX_0+X_0B$, for $B\in \ker \delta_{(P_0,f_0)}$ (note that since $B$ and $X_0$ are anti-Hermitian,  these operators are indeed selfadjoint). We claim that the minimality condition of $X_0$, implies that 
$$
\s\cap Gl(\h)^+=\emptyset.
$$
Suppose that this intersection is non empty, i.e. there exists $s\in\mathbb{R}$ and $b\in\ker \delta_{(P_0,f_0)}$ such that
$$
s(X_0^2+\|X_0\|^21)+BX_0+X_0B \ge r 1
$$
for some $r>0$. We may suppose $s>0$, otherwise add $(1-s)(X_0^2+\|X_0\|^2)$ to the above inequality.
Dividing by $s$ we get
\begin{equation}\label{mayor que cero}
X_0^2+\|X_0\|^21+BX_0+X_0B \ge r 1
\end{equation}
for some (other) $B\in\ker\delta_{(P_0,f_0)}$ and $r>0$.
In particular this implies that
$$
X_0^2+BX_0+X_0B\ge (r-\|X_0\|^2)1,
$$
and therefore the spectrum of $X_0^2+BX_0+X_0B$ satisfies
$$
\sigma(X_0^2+BX_0+X_0B)\subset (-\|X_0\|^2,+\infty).
$$
Also, for $n\ge 1$,
$$
n(X_0^2+\|X_0\|^21)+BX_0+X_0B \ge X_0^2+\|X_0\|^21+BX_0+X_0B \ge r 1
$$
and thus
$$
X_0^2+\|X_0\|^21+\frac{B}{n}X_0+X_0\frac{B}{n} \ge\frac{r}{n} 1.
$$
Since $X_0^2+\frac{B}{n}X_0+X_0\frac{B}{n}\to X_0^2$, by the semi-continuity of the spectrum, given the open neighbourhood $(-\infty,\|X_0\|^2)$ of $X_0^2$ (recall that $X_0\leq 0$), there exists $n_0$ such that for $n\ge n_0$
$$
\sigma\left(X_0^2+\frac{B}{n}X_0+X_0\frac{B}{n}\right)\subset (-\infty,\|X_0\|^2).
$$
In particular, there exists $B\in\ker\delta_{(P_0,f_0)}$ of arbitrary small norm such that
$$
\sigma(X_0^2+BX_0+X_0B)\subset (-\|X_0\|^2,\|X_0\|^2),
$$
i.e. $\|X_0^2+BX_0+X_0B\|<\|X_0\|^2$.  We claim that this inequality contradicts the minimality of $X_0$. Indeed, this follows from the following result:
\begin{lem}
If $\|X_0+B\|\ge \|X_0\|$ for all $B\in\ker\delta_{(B_0,f_0)}$, then $\|X_0^2+BX_0+X_0B\|\ge \|X_0\|^2$ for all $B\in\ker\delta_{(B_0,f_0)}$.
\end{lem}
\begin{proof}
Consider for $t\in (0,1)$ the function
$$
g(t)=-X_0^2+\frac{1}{t}\{(X_0+tB)^*(X_0+tB)+X_0^2\}.
$$
We claim that $\|g(t)\|\ge \|X_0\|^2$. Otherwise, the convex combination
$$
tg(t)+(1-t)(-X_0^2)
$$  
would have norm strictly less than $\|X_0\|^2$. Note that this convex combination equals
$(X_0+tB)^*(X_0+tB)$. Thus we would have
$\|X_0\|^2>\|(X_0+tB)^*(X_0+tB)\|=\|X_0+tB\|^2$, which contradicts the minimality of $X_0$. Then
$$
\|-X_0^2-X_0B-BX_0-tB\|\ge \|X_0\|^2
$$
for all $t\in(0,1)$. If $t\to 0$ this yields
$$
\|X_0^2+X_0B+BX_0\|\ge \|X_0\|^2,
$$
which proves the lemma.
\end{proof}
Our claim follows: $\s\cap GL(\h)^+=\emptyset$. $Gl(\h)^+$ is a convex open subset of $\b_h(\h)$, $\s$ is a subspace of $\b_h(\h)$. Therefore, by the Hahn-Banach theorem there exists a (real) linear functional $\varphi_0$  in $\b_h(\h)$ such that
$$
\varphi_0(\s)=0 \hbox{ and } \varphi_0(Gl(\h)^+)>0.
$$
This functional extends to a complex linear functional $\varphi$ defined in $\b(\h)$. The fact that $\varphi(Gl^+(\h))>0$ implies that $\varphi$ is positive and bounded. We may normalize it so that it has norm $1$. Consider $\pi$  the Gelfand-Naimark-Segal representation induced by $\varphi$, with cyclic vector $\xi$. Let us prove that $\pi$ and $\xi$ satisfy the conditions in the statement of the proposition. Since $\varphi$ is zero at the generators of $\s$,
$$
\varphi(X_0^2+\|X_0\|^2)=0, \hbox{  i.e. } \varphi(X_0)^2=-\| X_0\|^2
$$
and
$$
\varphi(X_0B+BX_0)=0.
$$
The first condition means that $<\pi(X_0^2)\xi,\xi>=-\|X_0\|^2$.
Thus one has equality in the Cauchy-Schwarz inequality 
$$
\|X_0\|^2=<\pi(X_0^2)\xi,\xi>=|<\pi(X_0^2)\xi,\xi>|\le \|\pi(X_0^2)\|\|\xi\|^2\le \|X_0\|^2.
$$
Then $\pi(X_0^2)\xi$ is a multiple of $\xi$, and thus $\pi(X_0^2)\xi=-\|X_0\|\xi$. The second condition
$$
0=\varphi(X_0B+BX_0)=\varphi(X_0B)+\varphi((X_0B)^*)=\varphi(X_0B)+\overline{\varphi(X_0B)},
$$
i.e.
$$
0= Re \varphi(X_0B)=Re <\pi(X_0B)\xi,\xi>=-Re <\pi(B)\xi,\pi(X_0)\xi>.
$$ 
Considering $iB\in\ker\delta_{(P_0,f_0)}$, one also gets that  $Im <\pi(B)\xi,\pi(X_0)\xi>=0$,
which finishes the proof.
\end{proof}

\begin{teo}
Let $V\in (T\r)_{(P_0,f_0)}$ with $|V|_{(P_0,f_0)}=1$. Let $X_0$ be a minimal lifting of $V$. Then the curve 
$$
\delta(t)=e^{tX_0}\cdot (P_0,f_0)=(e^{tX_0}P_0e^{-tX_0}, e^{tX_0}f_0)
$$
has minimal length along its path for $|t|\le \pi/2$.
\end{teo}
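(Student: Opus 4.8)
The plan is to adapt the scheme of \cite{cocometricacociente}.

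\emph{Reduction.} First I would check that $\delta$ has constant speed $1$, so that $L(\delta|_{[0,t_0]})=t_0$. Writing $\delta(t)=(P_t,f_t)$, the identity $[X_0,e^{tX_0}]=0$ gives $\dot\delta(t)=\delta_{(P_t,f_t)}(X_0)$, so $X_0$ is a lifting of $\dot\delta(t)$ at $\delta(t)$; since the $\u(\h)$–action is isometric for the quotient metric, $|\dot\delta(t)|_{\delta(t)}=|\delta_{(P_0,f_0)}(X_0)|_{(P_0,f_0)}=|V|_{(P_0,f_0)}=1$. Hence it suffices to prove $L(\gamma)\ge t_0$ for every piecewise $C^1$ curve $\gamma\colon[0,1]\to\r$ with $\gamma(0)=(P_0,f_0)$, $\gamma(1)=\delta(t_0)$ and $0<t_0\le\pi/2$. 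Such a $\gamma$ lies in $\r_{P_0}$; using the continuous local sections of Proposition \ref{secloc} and a partition of $[0,1]$, I would lift it to a piecewise $C^1$ curve $\Gamma\colon[0,1]\to\u(\h)$ with $\Gamma(0)=1$, and since $\Gamma(1)\in e^{t_0X_0}K$ with the isotropy group $K\cong\u(R(P_0)\ominus\langle f_0\rangle)\times\u(N(P_0))$ connected, after right–multiplying $\Gamma$ by a path in $K$ one may take $\Gamma(1)=e^{t_0X_0}$. Setting $W(t)=\Gamma(t)^{-1}\dot\Gamma(t)$, invariance of the metric and the description of $(T\r)_{\gamma(t)}$ from Section \ref{sec3} give $|\dot\gamma(t)|_{\gamma(t)}=\inf\{\|W(t)+B\|:B\in\ker\delta_{(P_0,f_0)}\}$, so $L(\gamma)=\int_0^1\inf_{B\in\ker\delta_{(P_0,f_0)}}\|W(t)+B\|\,dt$.

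\emph{Transport by a representation.} Next I would apply Proposition \ref{representacion} to $X_0$, obtaining a representation $\pi$ of $\b(\h)$ on a Hilbert space $\l$ and a unit vector $\xi\in\l$ with $\pi(X_0^2)\xi=-\xi$ and $\mathrm{Re}\langle\pi(X_0)\xi,\pi(B)\xi\rangle=0$ for $B\in\ker\delta_{(P_0,f_0)}$ (applied to $iB$ this gives $\langle\pi(X_0)\xi,\pi(B)\xi\rangle=0$). Then $\|\pi(X_0)\|=1$ and $\pi(X_0)$ rotates $\mathrm{span}\{\xi,\pi(X_0)\xi\}$, so $e^{s\pi(X_0)}\xi=\cos s\,\xi+\sin s\,\pi(X_0)\xi$. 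Consider the flag manifold $\f(\l)=\{(Q,E)\in\p(\l)\times\p(\l):E\le Q\}$ with the quotient Finsler metric induced by the operator norm on $\u(\l)$; each of its components is a quotient of $\u(\l)$ by the unitary group of the von Neumann subalgebra $N=\{C\in\b(\l):[C,\pi(P_0)]=[C,\pi(f_0\otimes f_0)]=0\}$ of $\b(\l)$, so the results of \cite{cocometricacociente} apply to $\f(\l)$. Since $B(f_0\otimes f_0)=(f_0\otimes f_0)B=0$ and $[B,P_0]=0$ for $B\in\ker\delta_{(P_0,f_0)}$, the operators $\pi(B)$ are anti-Hermitian elements of $N$. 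Hence the $\u(\h)$–equivariant map $\Phi\colon\r_{P_0}\to\f(\l)$, $\Phi(P,f)=(\pi(P),\pi(f\otimes f))$, does not increase Finsler length: putting $c=\Phi\circ\gamma$, the curve $\pi\circ\Gamma$ lifts $c$ with velocity–lift $\pi(W(t))$, so $|\dot c(t)|=\inf\{\|\pi(W(t))+C\|:C\in N,\ C^*=-C\}\le\|\pi(W(t)+B)\|\le\|W(t)+B\|$ for every $B\in\ker\delta_{(P_0,f_0)}$, whence $|\dot c(t)|\le|\dot\gamma(t)|_{\gamma(t)}$ and $L(\Phi\circ\gamma)\le L(\gamma)$. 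Finally, $e^{tX_0}(f_0\otimes f_0)e^{-tX_0}=f_t\otimes f_t$ gives $\Phi\circ\delta(t)=e^{t\pi(X_0)}\cdot(\pi(P_0),\pi(f_0\otimes f_0))$.

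\emph{Closing the argument; where $\pi/2$ comes from.} It remains to verify that $\pi(X_0)$ is a minimal lifting of the velocity of $\Phi\circ\delta$ at $(\pi(P_0),\pi(f_0\otimes f_0))$. Granting this, $\Phi\circ\delta$ is a minimizing geodesic of $\f(\l)$ for $|t|\le\pi/2$ by \cite{cocometricacociente}, of constant speed $\|\pi(X_0)\|=1$; therefore $L(\gamma)\ge L(\Phi\circ\gamma)\ge d_{\f(\l)}(\Phi(P_0,f_0),\Phi(\delta(t_0)))=L(\Phi\circ\delta|_{[0,t_0]})=t_0$, as wanted. The value $\pi/2$ is inherited from \cite{cocometricacociente} through the Grassmannian factors of $\f(\l)$: it is the diameter of $\mathbb{CP}^1$, the round $2$–sphere of radius $1/2$. \textbf{The hard part} is precisely the minimality of $\pi(X_0)$ in $\f(\l)$: one must show $\mathrm{Re}\langle\pi(X_0)\xi,C\xi\rangle=0$ for every anti-Hermitian $C\in N$; the contribution of the part of $C$ acting off $\mathrm{ran}\,\pi(f_0\otimes f_0)$ is $\langle\pi(X_0)\xi,\pi(B)\xi\rangle=0$, while the part acting on $\mathrm{ran}\,\pi(f_0\otimes f_0)$ has to be handled by hand, using $\pi(X_0^2)\xi=-\|X_0\|^2\xi$ and the block form \eqref{matriz posta} of a minimal lifting. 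This is exactly where the fibre vector $f$ forces the ``minor modification'' of \cite{cocometricacociente} mentioned in the introduction; alternatively one can run the estimate of \cite{cocometricacociente} directly on $\r$, using the Banach–Lie isotropy version of the representation furnished by Proposition \ref{representacion}.
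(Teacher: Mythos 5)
Your overall blueprint --- produce $(\pi,\xi)$ from Proposition \ref{representacion} and then push $\r_{P_0}$ through a length--non--increasing equivariant map into a homogeneous space where the image of $\delta$ is a known minimal geodesic --- is the same as the paper's. But the target you choose, the flag manifold $\f(\l)$ with $\Phi(P,f)=(\pi(P),\pi(f\otimes f))$, cannot work, and the step you defer as ``the hard part'' (that $\pi(X_0)$ is a minimal lifting for $\Phi\circ\delta$ in $\f(\l)$) is not merely unproved: it is false in general. The point is that $\Phi$ forgets the phase of $f$, while the quotient metric of $\r$ remembers it, because the isotropy algebra $\ker\delta_{(P_0,f_0)}$ is cut out by the condition $Bf_0=0$ and not by $[B,f_0\otimes f_0]=0$. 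Concretely, take $V=(0,if_0)\in(T\r)_{(P_0,f_0)}$. Every lifting $X$ satisfies $Xf_0=if_0$, hence $\|X\|\ge 1$, and $X_0=i\,f_0\otimes f_0$ is a lifting of norm $1$ commuting with $P_0$; so $|V|_{(P_0,f_0)}=1$ and $X_0$ is a minimal lifting. The corresponding geodesic is $\delta(t)=(P_0,e^{it}f_0)$, which the theorem asserts has minimal length $t_0$ for $t_0\le\pi/2$. But $\Phi\circ\delta$ is \emph{constant} (neither $\pi(P_0)$ nor $\pi(f_0\otimes f_0)$ moves), so your chain $L(\gamma)\ge L(\Phi\circ\gamma)\ge d_{\f(\l)}\bigl(\Phi(P_0,f_0),\Phi(\delta(t_0))\bigr)$ yields only $L(\gamma)\ge 0$. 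Equivalently, $i\cdot 1$ and, more generally, any anti-Hermitian $C$ acting as a nonzero multiple of the identity on the range of $\pi(f_0\otimes f_0)$ lies in the flag isotropy algebra $N$ but not in $\ker\delta_{(P_0,f_0)}$, and Proposition \ref{representacion} gives the orthogonality $\mathrm{Re}\langle\pi(X_0)\xi,\pi(B)\xi\rangle=0$ only for the latter; here $\|\pi(X_0)-\tfrac{i}{2}\|\le\tfrac12<1=\|X_0\|$, so $\pi(X_0)$ is far from minimal in $\f(\l)$.

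The paper's proof avoids precisely this collapse. It does not quotient by the commutant of $\{\pi(P_0),\pi(f_0\otimes f_0)\}$; instead it forms the subspace $\s_0=\overline{\{\pi_0(A)\xi_0:\ A\in\ker\bar\delta_0\}}\subset\h_0$ (with $\ker\bar\delta_0$ defined by $[A,P_0]=0$ \emph{and} $Af_0=0$), takes the symmetry $\rho_0$ that is $+1$ on $\s_0$ and $-1$ on $\s_0^\perp$, and uses the maps $F_0(U\cdot(P_0,f_0))=\pi_0(U)\rho_0\pi_0(U)^*$ into $Gr(\h_0)$ followed by the evaluation $\Pi_0(\rho)=\rho\,\xi_0$ into the unit sphere of $\h_0$. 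One then checks $\|d(\Pi_0F_0)(W)\|\le 2|W|$ for every tangent vector, while $\Pi_0F_0\delta(t)=e^{2t\pi_0(X_0)}\xi_0$ is a great circle traversed at exactly twice the speed of $\delta$ (this uses $\pi_0(X_0)\xi_0\in\s_0^\perp$, which follows from condition 2 of Proposition \ref{representacion}); comparing lengths in the sphere gives minimality up to $|t|\le\pi/2$. This is exactly the ``minor modification'' needed because the isotropy group is a Banach--Lie group and not the unitary group of a subalgebra; your first paragraph (constant speed of $\delta$, lifting of competing curves) is fine, but the projection step must be replaced by a construction of this kind, since any map factoring through $(P,f)\mapsto(P,f\otimes f)$ is blind to the direction $(0,if_0)$.
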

\begin{proof}
Following Dur\'an, Mata Lorenzo, and Recht \cite{cocometricacociente}, we shall construct a smooth map from $\r$, in fact, from the connected component of $(P_0,f_0)$ in $\r$ to the unit sphere of an appropriate Hilbert space.
Let $\pi_0$ and $\xi_0$ be the cyclic representation and the cyclic vector in the Hilbert space $\h_0$ obtained by means of Proposition \ref{representacion}. Denote by $\bar{\delta}_0$ the natural extension of $\delta_{(P_0,f_0)}$, which is defined in $\b_{ah}(\h)$, to the whole $\b(\h)$, namely: $\bar{\delta}_0(A)=(AP_0-P_0A,Af_0)$. Consider the closed subspace
$$
\s_0=\overline{\{ \pi_0(A)\xi_0: A\in\ker\bar{\delta}_0\}}\subset \h_0.
$$
and the symmetry $\rho_0$ acting in $\h_0$, which equals the identity in $\s_0$ and minus the identity in $\s_0^\perp$. 
Let $W$ be in the isotropy group $\ii_{(P_0,f_0)}$. Then $\pi_0(W)$ commutes with $\rho_0$, indeed, it leaves $\s_0$ invariant
$$
\pi_0(W)\pi_0(A)\xi_0=\pi_0(WA)\xi_0\in\s_0,
$$
because $WA\in\ker\bar{\delta}_0$.  Consider the map, defined in the connected component $\r_0$ of $(P_0,f_0)$ in $\r$,  to the set of symmetries of $\h_0$ (denoted appropriately in \cite{cocometricacociente} the Grassmann manifold $Gr(\h_0)$ of $\h_0$),
$$
F_0: \r_0\to Gr(\h_0) , \ \ F_0(U\cdot (P_0,f_0))=\pi_0(U) \rho_0 \pi_0(U)^*.
$$
$F_0$ is well defined: if $U_1\cdot (P_0,f_0)=U_2\cdot (P_0,f_0)$, then $U_2^*U_1\in\ii_{(P_0,f_0)}$. Then 
$$
\pi_0(U_2^*U_1) \rho_0=\rho_0 \pi_0(U_2^*U_1), \ \hbox{ i.e. } \pi_0(U_1)\rho_0 \pi_0(U_1)^*=\pi_0(U_2)\rho_0\pi_0(U_2)^*.
$$
Also $F_0$ is smooth: because of the manifold structure of $\r_0$ is final with respect to the smooth submersion $\rho=\rho_{(P_0,f_0)}$ (Proposition \ref{smooth}) it suffices to check that
$$
(F\circ\rho)(U)=\pi_0(U)\rho_0\pi_0(U)^*
$$ 
is smooth. This is apparent since $\pi_0$ is bounded and  linear, the product and the adjoint are smooth in $\b(\h_0)$, and $Gr(\h_0)$ is an embedded submanifold of $\b(\h_0)$.

There is a natural map from $Gr(\h_0)$ to the unit sphere $\mathbb{S}(\h_0)$, by means of $\xi_0$,
$$
\Pi_0:Gr(\h_0)\to \mathbb{S}(\h_0), \ \ \Pi_0(\rho)=\rho\xi_0.
$$
The proof of the theorem proceeds with the following syllogism:
\begin{enumerate}
\item
The  maps $\frac12 F_0$ and  $\Pi_0$ are length reducing.
\item
The composition $\Pi_0F_0$ exactly doubles the length of $\delta$ (i.e. length of $\Pi_0F_0\delta$ is $2$  times the length of $\delta$), and $\Pi_0F_0\delta$ is a minimal geodesic of $\mathbb{S}(\h_0)$.
\item
Therefore, $\delta$ has minimal length along its path, provided that $|t|\le \pi/2$.
\end{enumerate}
Let us check these steps. 

\smallskip

\noindent Step 1: Since $\Pi_0$ is the restriction of a linear map, if $\rho(t)$ is a curve in $Gr(\h_0)$ with $\rho(0)=\rho$ and $\dot{\rho}(0)=X$
$$
d(\Pi_0)_\rho(X)=\frac{d}{dt}\biggm|_{t=0}\Pi_0(\rho(t))=X\xi_0,  
$$
and thus $\|d(\Pi_0)_\rho(X)\|=\|X\xi_0\|\le \|X\|$. On the other hand, let $\gamma(t)=U(t)\cdot(P_0,f_0)$ be a smooth curve in $\r$, with $\gamma(0)=(P,f)=U\cdot(P_0,f_0)$, $\dot{\gamma}(0)=(X,v)=V$. Denote $u(t)=\pi_0(U(t))$ and $u=\pi_0(U)$. Then 
\begin{equation}\label{diferencial de F_0}
\frac{d}{dt}\biggm|_{t=0} F_0(\gamma(t))=\frac{d}{dt}\biggm|_{t=0}u(t)\rho_0u^*(t)=\dot{u}(0)\rho_0 u^* +u\rho_0 \dot{u}^*(0)=u\{u^*\dot{u}(0)\rho_0+\rho_0 \dot{u}^*(0)u\}u^*.
\end{equation}
Since $0=\dot{u^*u}=\dot{u}^*u+u^*\dot{u}$, and thus $\dot{u}^*u=-u^*\dot{u}$, (\ref{diferencial de F_0}) equals
$$
u\{u^*\dot{u}\rho_0-\rho_0u^*\dot{u}\}u^*, 
$$
whose norm equals
$$
\|u\{u^*\dot{u}\rho_0-\rho_0u^*\dot{u}\}u^*\|=\|u^*\dot{u}\rho_0-\rho_0u^*\dot{u}\|.
$$
Since $\rho_0=2P_{\s_0}-1$,
$$
u^*\dot{u}\rho_0-\rho_0u^*\dot{u}=[u^*\dot{u},\rho_0]=2[u^*\dot{u},P_{\s_0}],
$$
and therefore this commutant is a co-diagonal matrix with respect to the decomposition $\h_0=\s_0\oplus\s_0^\perp$. Then its norm equals the norm of the first column $u^*\dot{u}P_{\s_0}$.  Thus,
$$
\|d(F_0)_{(P,f)}(V)\|=2\|[u^*\dot{u},P_{\s_0}]\|=2\|u^*\dot{u}P_{\s_0}\|\le 2\|u^*\dot{u}(0)\|=2\|\pi_0(U^*\dot{U}(0))\|\le 2\|\dot{U}(0)\|.
$$
On the other hand, $(UP_0U^*,Uf_0)=\gamma(0)=(P,f)$ and 
$$
V=\dot{\gamma}(0)=(\dot{U}(0)P_0U^*+UP_0\dot{U}^*(0),\dot{U}(0)f_0)=(\dot{U}(0)U^*P+PU\dot{U}^*(0),\dot{U}(0)U^*f).
$$
Again using that $U\dot{U}^*(0)=-\dot{U}(0)U^*$, one has that
$$
V=(\dot{U}(0)U^*P-P\dot{U}(0)U^*,\dot{U}(0)U^*f)=\delta_{(P,f)}(\dot{U}(0)U^*),
$$
i.e. $\dot{U}(0)U^*\in\b_{ah}(\h)$ is a lifting for $V\in (T\r)_{(P,f)}$. 

We claim that any lifting $Z$ of $V$ is obtained in this fashion. Indeed, if $\delta_{(P,f)}(Z)=(ZP-PZ,Zf)=V$, consider $U(t)=e^{tZ}$ and $\gamma(t)=U(t)\cdot (P,f)\in\r$. Clearly $U(0)\cdot (P,f)=(P,f)$ and 
$\dot{U}(0)U^*=Z$.

It follows from this claim and inequality (\ref{diferencial de F_0}) that 
$$
\|d(F_0)_{(P,f)}(V)\|\le 2 \|Z\|
$$
for any lifting $Z$ of $V$. Then
$$
\|d(F_0)_{(P,f)}(V)\|\le 2 |V|_{(P,f)}.
$$

\smallskip

\noindent Step 2: Clearly, we have $\|d(\Pi_0F_0)_{\Pi_0F_0(P,f)}(X)\|\le 2 |V|_{(P,f)}$.
Let us compute the image of $\delta(t)=e^{tX_0}\cdot(P_0,f_0)$ under $\Pi_0F_0$:
$$
\Pi_0F_0(\delta(t))=\pi_0(e^{tX_0})\rho_0 \pi_0(e^{-tX_0})\xi_0=e^{t\pi_0(X_0)}\rho_0e^{-t\pi_0(X_0)}\xi_0.
$$
Note that since $\pi_0(X_0^2)\xi_0=-\|X_0\|^2\xi_0$, we have
$$
\pi_0(X_0)^{2n}\xi_0=\|X_0\|^{2n}\xi_0 \hbox{ and } \pi_0(X_0)^{2n+1}\xi_0=(-1)^n\|X_0\|^{2n}\pi_0(X_0)\xi_0
$$
for $n\ge 0$. Then  $e^{-t\pi_0(X_0)}\xi_0=\cos(t\|X_0\|)\xi_0-\frac{1}{\|X_0\|} \sin(t\|X_0\|)\pi_0(X_0)\xi_0$. Note that since $<\pi(X_0)\xi_0,\pi_0(A)\xi_0>=0$ for any $A\in\ker\bar{\delta}_0$, one has that $\pi_0(X_0)\xi_0\in\s_0^\perp$ (whereas $\xi_0\in\s_0$). Then $\rho_0\xi_0=\xi_0$ and $\rho_0\pi_o(X_0)\xi_0=-\pi_0(X_0)\xi_o$. Thus
$$
\rho_0e^{-t\pi_0(X_0)}\xi_0=\cos(t\|X_0\|)\xi_0+\frac{1}{\|X_0\|} \sin(t\|X_0\|)\pi_0(X_0)\xi_0=e^{t\pi_0(X_0)}\xi_0,
$$
and
$$
\Pi_0F_0\delta(t)=e^{t\pi_0(X_0)}\rho_0e^{-t\pi_0(X_0)}\xi_0=e^{2t\pi_0(X_0)}\xi_0=\cos(2t\|X_0\|)\xi_0+\frac{1}{\|X_0\|} \sin(2t\|X_0\|)\pi_0(X_0)\xi_0,
$$
which is a minimal geodesic of $\mathbb{S}(\h_0)$, up to $|2t|\le \pi$, i.e. $|t|\le \pi/2$. 
Finally, the length of $\Pi_0F_0\delta$ is $2\|\pi_0(X_0)\xi_0\|=2\|X_0\|$, by the properties of $\pi_0$ and $\xi_0$ established in the previous lemma. The proof finishes comparing the lengths of the images in the unit sphere (using the mapping $\Pi_0F_0$) of $\delta$ on any subinterval of $[-\pi/2,\pi/2]$ and any other curve in $\r$ joining the same endpoints. 
\end{proof}
\begin{rem}
Let us examine particular directions in $\r$, and compute minimal liftings (and thus minimal geodesics).
\begin{enumerate}
\item
Let $V=(0,g)\in (T\r)_{(P_0,f_0)}$. Then $P_0g=g$, and $<f_0,g>\in i\mathbb{R}$. Also note that 
$$
X_g=g\otimes f_0-f_0\otimes g+<f_0,g>f_0\otimes f_0
$$
satisfies $X_g^*=-X_g$ and $X_gf_0=g$. Since $g\in R(P_0)$, $X_g$ commutes with $P_0$:
$$
X_gP_0=g\otimes P_0f_0-f_0\otimes P_0g+<f_0,g>f_0\otimes P_0f_0=X_g=P_0X_g.
$$
Thus, $X_g$ is a lifting of $V$, i.e. $\delta_{(P_0,f_0)}(X_g)=([X_g,P_0],X_gf_0)=(0,g)=V$. To find minimal liftings, 
use the vector $f_0$ as the first vector of  an orthonormal basis $\{ f_i \}_{i=0}^\alpha$  of $R(P_0)$, where $\alpha=\dim R(P_0) \in [1, \infty]$. Note that if $e,e'\in\h$ are orthogonal to $f_0$, then $<X_ge,e'>=0$.  Thus, if $g=\sum_{i=0}^\alpha g_i f_i$, then the matrix of $X_g$ in the  decomposition $\h=\left(<f_0>\oplus (R(P_0)\ominus <f_0>)\right) \oplus N(P_0)$ is 
\[
X_g=
\left(
\begin{array}{c|c}
 \begin{array}{cr} \begin{array}{cccc} ig_0 & -\bar{g}_1 & -\bar{g}_2 & \dots \\ g_1 & 0 & 0 & \dots \\  g_2 & 0 & 0 & \dots \\ \vdots & \vdots & \vdots & \ddots \end{array} \\  \end{array} & 0 \\
\hline
0 & 0
\end{array}
\right)
\]
with $g_0\in\mathbb{R}$. Since the $1,2$, $2,1$ and $2,2$ block entries are zero, the second step stated in (\ref{matriz2}) has the trivial solution, so it remains to solve the best approximation problem in (\ref{fila1}). According to \cite[Corol. 1.3]{DKW82} all the solutions of this problem can be parametrized as
$$ Y_Z  = ig_0 T + \|g\|(1-T)^{1/2}Z (I-T)^{1/2},
$$
where $Z$ is any anti-Hermitian operator and
$$
T=\left(\|g\|^2-|g_0|^2\right)^{-1}\begin{pmatrix} g_1 \\ g_2 \\ \vdots   \end{pmatrix} \begin{pmatrix} \bar{g}_1 & \bar{g}_2 & \ldots \end{pmatrix}.
$$
We thus get that
\[
X_Z=
\left(
\begin{array}{c|c}
 \begin{array}{cr} \begin{array}{cccc} ig_0 & -\bar{g}_1 & -\bar{g}_2 & \dots \\ g_1 &  &  &  \\  g_2 &  & Y_Z &  \\ \vdots  &  &  &  \end{array} \\  \end{array} & 0 \\
\hline
0 & 0
\end{array}
\right)
\]
is a minimal lifting of $V$. Next note that the minimal curve $\delta_Z(t)=e^{tX_Z}f_0$ remains inside the unit sphere of $R(P_0)$: 
$$
\delta_Z(t)=(e^{tX_Z}P_0e^{-tX_Z}, e^{tX_Z}f_0)=(P_0,e^{tX_Z}f_0).
$$
If $Z=0$, then the minimal lifting $X_0$ may be expressed as
$$
X_0=\gamma_0 \otimes f_0 - f_0 \otimes \gamma_0  + i g_0 (f_0 \otimes f_0 + e \otimes e),
$$
where $e=\gamma_0 / \| \gamma_0 \|$. Since $X_1=\gamma_0 \otimes f_0 - f_0 \otimes \gamma_0$ and $X_2=i g_0 (f_0 \otimes f_0 + e \otimes e)$ commute, a straightforward computation shows that
\begin{align*}
\delta_0(t) & =(P_0,e^{tX_0}f_0)=(P_0, e^{tX_1}e^{tX_2}f_0) \\
& = (P_0, e^{itg_0}e^{tX_1}f_0)= (P_0, e^{itg_0}(\cos(t\|\gamma_0\|)f_0 + \sin(t\|\gamma_0\|)e)).
\end{align*}
Clearly, this curve is a geodesic in the unit sphere if and only if $g_0=0$. Therefore our metric, given by minimal liftings, does not induce the usual metric when restricted to the sphere.  
\item
Let $V=(Z,0)\in(T\r)_{(P_0,f_0)}$. Then $Zf_0=f_0$, with $Z^*=Z$  $P_0$-codiagonal. Consider $X_Z=ZP_0-P_0Z$. Then clearly $X_Z^*=-X_Z$ and $X_Zf_0=ZP_0f_0-P_0Zf_0=f_0-f_0=0$. Moreover,
$$
X_ZP_0-P_0X_Z=(ZP_0-P_0Z)P_0-P_0(ZP_0-P_0Z)=ZP_0-2P_0ZP_0+P_0Z=Z,
$$
because $Z$ is $P_0$-codiagobal. Thus $\delta_{(P_0,f_0)}(X_Z)=(Z,0)=V$. Also it is clear that $X_Z$ is a minimal lifting: it is codiagonal. Its norm coincides with the norm of $Z$. Finally, since $X_Zf_0=0$, the geodesic $\delta(t)=e^{tX_Z}f_0$ is
$$
\delta(t)=(e^{tX_Z}P_0e^{-tX_Z},f_0),
$$
where the first coordinate of the pair is a minimal geodesic of $\p(\h)$ (alas a special one, which leaves the unit vector $f_0\in R(P_0)$ fixed). 
\end{enumerate}
\end{rem}

\section{The restricted sphere bundle}\label{sec5}

In this section we shall consider the restricted version of the canonical bundle by using the restricted Grassmann manifold induced by a (fixed) decomposition $\h=\h_+\oplus\h_-$. Denote by $P_+$ the orthogonal projection onto $\h_+$, and by $P_-$ the projection onto $\h_-$. The restricted Grassmann manifold is the space
$$
\p_2^+(\h)=\{\, P\in\p(\h)\, : \, P-P_+\in\b_2(\h), \, j(P_+,P)=0 \, \},
$$
where $B_2(\h)$ denotes the ideal  of Hilbert-Schmidt operators and $j(P_+,P)$ is the index of the pair of projections $(P_+,P)$ defined by $j(P_+,P)=\text{index}(PP_+:R(P_+) \to R(P))$. It should be said that this space $\p_2^+(\h)$ is in fact the {\it zero index} component of the restricted Grassmannian, or the component containing $P_+$. In \cite{al} the geodesic structure of $\p_2^+(\h)$ was studied.


$\p_2^+(\h)$  is a Hilbert-Riemann manifold, its tangent spaces are naturally embedded in $\b_2(\h)_h$, the (real) Hilbert
space of Hermitian Hilbert-Schmidt operators, endowed with the usual trace. 
The group $\u_2(\h)=\{U\in\u(\h): U-1\in\b_2(\h)\}$ acts transitively on $\p_2^+(\h)$.
The restricted version $\r_+^2$ of the space $\r$ is the set
$$
\r_2^+=\{\, (P,f)\in\r \, : \, P\in\p_+^2(\h) \, \}.
$$
With the same proof as in Proposition \ref{smooth}, restricting the action and the local sections, we obtain that $\r_2^+$ is an embeded, smooth (real analytic) submanifold of the Hilbert space $\b_2(\h)_h\times \h$, and the action is transitive. 

Thus $(T\r_2^+)_{(P_0,f_0)}$ has a natural inner product and Riemannian metric,
\begin{equation}\label{producto interno}
\langle (X,f),(Y,g)\rangle =Tr(XY)+Re<f,g>,
\end{equation}
that we shall call the \textit{ambient} metric.

It is apparent that the tangent space $(T\r_2^+)_{(P_0,f_0)}$  at a given $(P_0,f_0)$ is 
\begin{eqnarray}
(T\r_2^+)_{(P_0,f_0)} &=& \{ (ZP_0-P_0Z, Zf_0): Z\in\b_2(\h), Z^*=-Z\}\nonumber\\
& = & \{(X,f)\in\b_2(\h)_h\times \h: X \hbox{ is } P_0-\hbox{codiagonal and } Re<f,f_0>=0\}\nonumber\\
&\subset & \b_2(\h)_h\times \h.\nonumber
\end{eqnarray}

It will be useful to compute explicitly the orthogonal projection 
$$
\Pi=\Pi_{(P_0,f_0)}:\b_2(\h)_h\times \h\to (T\r_2^+)_{(P_0,Q_0)}\subset \b_2(\h)_h\times \h.
$$
In the proof of Propostion \ref{smooth}, we otained the formula (\ref{E}) of the projection 
$\e$, defined in $\b(\h)\times \h$ onto the tangent space of the whole space $\r$ at $(P_0,f_0)$. Namely
$$
\e(X,f)=(P_0^\perp XP_0+P_0XP_0^\perp, i\text{Im}<h,f_0>f_0 + (P_0 - (f_0 \otimes f_0) )h + P_0^\perp Xf_0).
$$
It is apparent that if $X\in\b_2(\h)_h$, then $\e(X,f)$ belongs to the tangent space of the restricted version $\r_2^+$, and thus $\e$ is a projection onto $(T\r_2^+)_{(P_0,f_0)}$. However $\e$ is not orthogonal. To compute the orthogonal projection $\Pi$ we shall use the following known fact (see \cite[Theorem 1.3]{koli}):
$$
\Pi=\e(\e+\e^*-1)^{-1}.
$$
\begin{prop}
\begin{equation}\label{Pi}
\Pi(X,f)=\left(\Pi_1(X,f),\Pi_2(X,f)\right)
\end{equation}
where
\begin{equation}\label{Pi1}
\Pi_1(X,f)=P_0^\perp XP_0+P_0XP_0^\perp + \frac13 f_0\otimes P_0^\perp f +\frac13 P_0^\perp f \otimes f_0-\frac13 f_0\otimes f_0 X P_0^\perp -\frac13 P_0^\perp X f_0\otimes f_0,
\end{equation}
and
\begin{equation}\label{Pi2}
\Pi_2(X,f)=i Im<f,f_0>f_0+(P_0-f_0\otimes f_0)f+\frac23 P_0^\perp Xf_0+\frac13 P_0^\perp f .
\end{equation} 
\end{prop}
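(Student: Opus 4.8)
The plan is to carry out directly the computation $\Pi=\e(\e+\e^*-1)^{-1}$ quoted just before the statement, where the adjoint is taken with respect to the real inner product (\ref{producto interno}). Write $\e$, as given by (\ref{E}), as a $2\times 2$ block operator on $\b_2(\h)_h\times\h$: one has $\e=\left(\begin{smallmatrix}\e_{11}&0\\ T&\e_{22}\end{smallmatrix}\right)$ with $\e_{11}(X)=P_0^\perp XP_0+P_0XP_0^\perp$, $T(X)=P_0^\perp Xf_0$ and $\e_{22}(f)=i\,\mathrm{Im}\langle f,f_0\rangle f_0+(P_0-f_0\otimes f_0)f$. The first task is to compute $\e^*$. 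Both $\e_{11}$ and $\e_{22}$ are orthogonal projections: idempotency of $\e$ (established in the proof of Proposition \ref{smooth}) forces $\e_{11}^2=\e_{11}$ and $\e_{22}^2=\e_{22}$; the self-adjointness $\e_{11}^*=\e_{11}$ is the standard symmetry of the compressions; and for $\e_{22}$ I would rewrite $i\,\mathrm{Im}\langle f,f_0\rangle f_0=\tfrac12(f_0\otimes f_0)f-\tfrac12\langle f_0,f\rangle f_0$ and check that the complex-linear part $\tfrac12\,f_0\otimes f_0$ and the conjugate-linear part $f\mapsto-\tfrac12\langle f_0,f\rangle f_0$ are each symmetric for $\mathrm{Re}\langle\cdot,\cdot\rangle$; this is the one place where the interplay of the real and complex structures has to be handled with care. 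A short trace computation, using $\mathrm{Tr}(X(u\otimes v))=\langle Xu,v\rangle$ and $X^*=X$, gives $T^*(g)=\tfrac12\bigl(f_0\otimes P_0^\perp g+P_0^\perp g\otimes f_0\bigr)$. Hence $\e^*=\left(\begin{smallmatrix}\e_{11}&T^*\\0&\e_{22}\end{smallmatrix}\right)$ and $M:=\e+\e^*-1=\left(\begin{smallmatrix}R&T^*\\T&S\end{smallmatrix}\right)$, where $R:=2\e_{11}-1$ and $S:=2\e_{22}-1$ are self-adjoint symmetries.

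Next I would invert $M$. Since $T$ kills $P_0$-diagonal operators and lands in $N(P_0)$, while $T^*$ has $P_0$-codiagonal range, one records the relations $TR=T$, $RT^*=T^*$, $ST=-T$, $T^*S=-T^*$, so that $\left(\begin{smallmatrix}R&0\\0&S\end{smallmatrix}\right)M=\left(\begin{smallmatrix}1&T^*\\-T&1\end{smallmatrix}\right)=:N$ (recall $R=R^{-1}$, $S=S^{-1}$). The operator $N$ is inverted through the Schur complement of its $(1,1)$-entry, and the crucial computation is $TT^*=\tfrac12 P_0^\perp$ (because $(P_0^\perp g\otimes f_0)f_0=P_0^\perp g$ while $(f_0\otimes P_0^\perp g)f_0=0$); thus the Schur complement is $1+TT^*=1+\tfrac12 P_0^\perp$, whose inverse is $1-\tfrac13 P_0^\perp$. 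This is precisely where the coefficients $\tfrac13$ and $\tfrac23$ of (\ref{Pi1})--(\ref{Pi2}) originate. Assembling the blocks gives $N^{-1}=\left(\begin{smallmatrix}1-\tfrac23 T^*T&-\tfrac23 T^*\\ \tfrac23 T&1-\tfrac13 P_0^\perp\end{smallmatrix}\right)$ and $M^{-1}=N^{-1}\left(\begin{smallmatrix}R&0\\0&S\end{smallmatrix}\right)$.

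Finally I would compute $\Pi=\e M^{-1}$ entry by entry, simplifying with $\e_{11}R=\e_{11}$, $\e_{22}S=\e_{22}$, $\e_{11}T^*T=T^*T$, $\e_{22}T=0$, $\e_{22}P_0^\perp=0$, $TT^*T=\tfrac12 T$, and with the identity $(f_0\otimes f_0)XP_0^\perp=f_0\otimes P_0^\perp Xf_0$ used to put the rank-one terms into the form in which they appear in (\ref{Pi1}). This produces exactly (\ref{Pi1}) in the first coordinate and (\ref{Pi2}) in the second. The whole argument is elementary; the only real difficulty is organizational, namely keeping track of the real/complex structure when taking the adjoint of the $i\,\mathrm{Im}$ term and correctly composing the three block factors in $\e M^{-1}$. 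As an independent verification (and an alternative that avoids the inversion), one can instead check directly that the operator $\Pi$ given by (\ref{Pi1})--(\ref{Pi2}) is symmetric for (\ref{producto interno}), restricts to the identity on $(T\r_2^+)_{(P_0,f_0)}$, and maps into it; these three properties characterize the orthogonal projection onto the tangent space.
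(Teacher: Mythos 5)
Your proposal is correct and follows essentially the same route as the paper's own computation: the paper splits $\e=A+B$ into its diagonal part $A=\mathrm{diag}(\e_{11},\e_{22})$ and its off-diagonal part $B$ (your $T$), and everything reduces to the same key identity that $BB^*$ is $\frac12$ times a projection, i.e.\ your $TT^*=\frac12 P_0^\perp$, which is exactly where the coefficients $\frac13$ and $\frac23$ come from. The only cosmetic difference is that the paper inverts $\e+\e^*-1$ by noting that its square equals $1+BB^*+B^*B$ and inverting that spectrally, whereas you use a Schur complement; both yield the same answer $\Pi=A+\frac23\left(B+B^*+BB^*-B^*B\right)$.
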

\begin{proof}
Straightforwrd verification.
\end{proof}

\subsection{Levi-Civita connection of the ambient metric}
From now on, for $(P_0,f_0)\in\r_2^+$,  let us denote by $\Pi_{(P_0,f_0)}$ the projection $\Pi$ defined in (\ref{Pi}),(\ref{Pi1}), (\ref{Pi2}).
We endow $\r_2^+$ with the Hilbert-Riemann metric given by the usual trace and real part of the inner product of $\h$:
\begin{defi}\label{metrica 2}
Let $(X,f),(Y,g)\in (T\r_2^+)_{(P_0,f_0)}$. Then we define
$$
\langle (X,f),(Y,g) \rangle_{(P_0,f_0)}=Tr(XY)+Re<f,g>.
$$
\end{defi}

\begin{rem}\label{covarianza Pi}
Let $(P,f)\in\r_2^+$ and $U\in \u_2(\h)$. Then 
$$
\Pi_{U\cdot (P,f)}(X,g)=\Pi_{(P,f)}(U^*XU,U^*g).
$$
This can be verified by means of routine computations, involving elementary facts, such as $UP_0^\perp U^*=(UPU^*)^\perp$, $U(f\otimes f)U^*=Uf \otimes Uf$, etc.
\end{rem} 
Given $(P_0,f_0)\in\r_2^+$, we still denote by $\rho_{(P_0,f_0)}$ the restriction of the former map $\rho_{(P_0,f_0)}$ to the group $\u_2(\h)$, and by $\delta_{(P_0,f_0)}$ its differential at the identity,
$$
\delta_{(P_0,f_0)}:\b_2(\h)_{ah}\to T(\r_2^+)_{(P_0,f_0)}\subset \b_2(\h)_h\times \h , \ \ \delta_{(P_0,f_0)}(Z)=(ZP_0-P_0Z,Zf_0),
$$
where $\b_2(\h)_{ah}$ denotes the space of anti-Hermitian Hilbert-Schmidt operators.
The description of the nullspace of $\delta_{(P_0,f_0)}$ done in the proof of Propostion \ref{smooth}, is valid in this context, assuming that the elements lie in $\b_2(\h)_{ah}$. Namely, $X\in N(\delta_{(P_0,f_0)})$ if $X\in\b_2(\h)$, and it is of the form
$$
X=\left( \begin{array}{cc} X_{11} & 0 \\ 0 & X_{22} \end{array} \right)
$$
with $X_{ii}$ anti-Hermitian and $X_{11}f_0=0$, and  $X_{11}$ can be written as a matrix in terms of the decomposition $R(P_0)=<f_0>\oplus <f_0>^\perp$,
$$
X_{11}=\left( \begin{array}{cccc} 0 & 0 &  0 & \dots \\ 0 & x'_{11} & x'_{12} & \dots \\ 0 & x'_{21} & x'_{22} & \dots \\  \vdots & \vdots & \vdots & \ddots \end{array} \right) =\left( \begin{array}{cc} 0 & 0 \\ 0 & X' \end{array} \right),
$$
where $X'$ is an  anti-Hermitian Hilbert-Schmidt  operator acting in $R(P_0)\ominus <f_0>$. The intersection of $\b_2(\h)$ with the supplement found in Propostion \ref{smooth}, presented as matrices in the decomposition 
$$
\h=<f_0>\oplus (R(P_0)\ominus<f_0>) \oplus N(P_0)
$$
which  are  the elements of the form
\begin{equation}\label{nucleoparaerre reducido}
\left( \begin{array}{cc}
\begin{array}{cc}  i  t & \vec{g} \\  -\vec{\bar{g}}^t  & 0 \end{array}   &  Y \\  -Y^*    & 0 \end{array} \right)=\left(\begin{array}{ccc}  i  t & g & Y_1 \\  -\bar{g}^t  & 0 & Y_2  \\  -Y_1^* & -Y_2^*  & 0 \end{array} \right),
\end{equation}
with $t\in\mathbb{R}$. Denote this set of matrices by $\mathbb{H}_{(P_0,f_0)}$. A straigthforward computation shows that 
\begin{prop}
The space $\hh_{(P_0,Q_0)}$ is the orthogonal complement for $N(\delta_{(P_0,f_0)})$, for the trace inner product in $\b_2(\h)_{ah}$.
\end{prop}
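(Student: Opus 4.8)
The goal is to prove the two standard halves of ``being an orthogonal complement'': first that $\hh_{(P_0,f_0)}$ is orthogonal to $N(\delta_{(P_0,f_0)})$ with respect to the trace inner product $\langle A,B\rangle=Tr(AB^*)=-Tr(AB)$ on $\b_2(\h)_{ah}$, and second that $\hh_{(P_0,f_0)}+N(\delta_{(P_0,f_0)})=\b_2(\h)_{ah}$. These two facts force $\hh_{(P_0,f_0)}=N(\delta_{(P_0,f_0)})^\perp$: if $X$ is orthogonal to $N(\delta_{(P_0,f_0)})$ and we write $X=X_1+X_2$ with $X_1\in\hh_{(P_0,f_0)}$ and $X_2\in N(\delta_{(P_0,f_0)})$, then $0=\langle X,X_2\rangle=\langle X_1,X_2\rangle+\langle X_2,X_2\rangle=\|X_2\|^2$, so $X=X_1\in\hh_{(P_0,f_0)}$; the reverse inclusion $\hh_{(P_0,f_0)}\subseteq N(\delta_{(P_0,f_0)})^\perp$ is the orthogonality statement itself.

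For the orthogonality I would fix $A\in\hh_{(P_0,f_0)}$ and $B\in N(\delta_{(P_0,f_0)})$ and write both as $3\times 3$ block operators in the decomposition $\h=\langle f_0\rangle\oplus(R(P_0)\ominus\langle f_0\rangle)\oplus N(P_0)$. By the description of $N(\delta_{(P_0,f_0)})$ recalled just before the statement, $B$ (and hence $B^*$) is block diagonal with vanishing $\langle f_0\rangle$-corner, with diagonal blocks $0$, $X'$, $X_{22}$ where $X'$ and $X_{22}$ are anti-Hermitian; and $A$ has the form (\ref{nucleoparaerre reducido}), in particular its $(2,2)$ and $(3,3)$ blocks vanish. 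Multiplying the two block matrices one checks at once that each of the three diagonal blocks of $AB^*$ is zero, hence $\langle A,B\rangle=Tr(AB^*)=0$. (Equivalently, with $Q=1-f_0\otimes f_0$ one has $B=QBQ$, so $Tr(AB^*)=Tr\big((QAQ)(QB^*Q)\big)$, where $QAQ$ is codiagonal and $QB^*Q$ is diagonal for $(R(P_0)\ominus\langle f_0\rangle)\oplus N(P_0)$, so their product is codiagonal and traceless.)

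For the spanning statement I would take an arbitrary $X\in\b_2(\h)_{ah}$, put $P'=P_0-f_0\otimes f_0$, and split $X=X_1+X_2$ with $X_2=P'XP'+P_0^\perp XP_0^\perp$ and $X_1=X-X_2$. Then $X_2$ is anti-Hermitian and Hilbert--Schmidt, commutes with $P_0$, and kills $f_0$, so $X_2\in N(\delta_{(P_0,f_0)})$; and $X_1$ is anti-Hermitian and Hilbert--Schmidt with vanishing $(2,2)$ and $(3,3)$ blocks, hence is exactly of the form (\ref{nucleoparaerre reducido}), i.e. $X_1\in\hh_{(P_0,f_0)}$. This is nothing but the decomposition already exhibited in the proof of Proposition \ref{smooth}, now read inside the Hilbert--Schmidt ideal, where every summand stays Hilbert--Schmidt because compressions and differences of Hilbert--Schmidt operators are Hilbert--Schmidt. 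Putting the two parts together yields the proposition. I do not anticipate any genuine obstacle: the argument is block-matrix bookkeeping together with the elementary fact that a product of a codiagonal and a diagonal operator is traceless; the only point needing a little care is keeping track of which blocks vanish and confirming that each summand lands in $\b_2(\h)$.
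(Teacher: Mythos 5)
Your proof is correct and follows exactly the route the paper intends: the paper dismisses this as ``a straightforward computation,'' and your block-matrix verification of orthogonality in the decomposition $\h=\langle f_0\rangle\oplus(R(P_0)\ominus\langle f_0\rangle)\oplus N(P_0)$ together with the explicit splitting $X=X_1+X_2$ (with $X_2=P'XP'+P_0^\perp XP_0^\perp$) is precisely that computation, carried out with the details the paper omits.
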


Since the metric is defined by means of a metric in the ambient space, the covariant derivative consists of differentiating in the ambient space and projecting onto the tangent spaces. Therefore a curve $\delta$ is a geodesic if it satisfies the equation
$$
\Pi_{\delta(t)}(\ddot{\delta})=0
$$
for all $t$.
\begin{rem}
Let $Z\in\b_2(\h)_{ah}$. Then $\gamma(t)=e^{tZ}\cdot(P_0,f_0)$ is a geodesic of the connection if and only if
$$
\Pi_{(P_0,f_0)}(Z^2P_0-2ZP_0Z+P_0Z^2,Z^2f_0)=0.
$$
Indeed, $\ddot{\gamma}(t)=(e^{tZ}\{Z_2P_0-2ZP_0Z+P_0Z^2\}e^{-tZ},e^{tZ}Z^2f_0)=e^{tZ}\cdot(Z^2P_0-2ZP_0Z+P_0Z^2,Z^2f_0)$, and thus, using Remark \ref{covarianza Pi},
$$
\Pi_{\gamma(t)}(\ddot{\gamma}(t))=\Pi_{e^{tZ}\cdot(P_0,f_0)}(e^{tZ}\cdot(Z^2P_0-2ZP_0Z+P_0Z^2,Z^2f_0))
$$
$$
=\Pi_{(P_0,f_0)}(Z^2P_0-2ZP_0Z+P_0Z^2,Z^2f_0).
$$
If one takes $Z$ to be $P_0$-codiagonal (and thus in $\hh_{(P_0,f_0)}$), such that $f_0$ is not an eignevector for $Z^2$, then $\gamma$ is not a geodesic.
\end{rem}
\subsection{Reductive connection}

We shall consider another natural linear connection in $\r_2^+$, induced by the decomposition of the Banach-Lie algebra $\b_2(\h)$ of $\u_2(\h)$, induced by  $(P_0,f_0)\in\r_2^+$: $\b_2(\h)_{ah}=N(\delta_{(P_0,f_0)})\oplus \hh_{(P_0,f_0)}$.

Let us denote by $\ii_2(P_0,f_0)$ the isotropy group of the restricted action 
$$
\ii_2(P_0,f_0)=\{V\in\u_2(\h): V\cdot(P_0,f_0)=(P_0,f_0)\}.
$$ 
Note that if $V\in\ii_2(P_0,f_0)$, then $Ad(V)(N(\delta_{(P_0,f_0)})=N(\delta_{(P_0,f_0)})$. Indeed, let $Y\in N(\delta_{(P_0,f_0)})$, 
then $YP_0=P_0Y$ and $Yf_0=0$. Since $V\in\ii_2(P_0,f_0)$, $VP_0=P_0V$ and $Vf_0=f_0$ (thus, $V^*f_0=f_0$). Then
$VYV^*$ also commutes with $P_0$ and $VYV^*f_0=VYf_0=0$. On the other hand, $Ad(V)$ is an orthogonal transformation of $\b_2(\h)$ (endowed with the (real part of the) trace inner product). Therefore $Ad(V)$ leaves $\hh_{(P_0,f_0)}$ ($=N(\delta_{(P_0,f_0)})^\perp$) invariant. 

Let us prove  that the distribution 
$$
\r_2^+ \ni (P_0,f_0) \mapsto \hh_{(P_0,f_0)}\subset \b_2(\h)_{ah}
$$
is {\it smooth}. In other words, if we denote by ${\bf P}_{(P_0,f_0)}$ the $Re Tr$-orthogonal projection onto $\hh_{(P_0,f_0)}$, we must prove  that the mapping
$$
\r_2^+ \ni (P_0,f_0) \mapsto {\bf P}_{(P_0,f_0)} \in \b_{\mathbb{R}}(\b_2(\h)_{ah})
$$
is smooth, where $\b_{\mathbb{R}}(\b_2(\h)_{ah})$ denotes the Banach space of real linear bounded operators acting in $\b_2(\h)_{ah}$ (endowed with the Hilbert-Schmidt norm).
\begin{lem}
Let ${\bf Q}={\bf P}^\perp_{(P_0,f_0)}=1_{\b_2(\h)_{ah}}-{\bf P}_{(P_0,f_0)}$. Then
$$
{\bf Q}(X)=P_0XP_0+P_0^\perp XP_0^\perp+f_0\otimes f_0 X f_0\otimes f_0-f_0\otimes f_0 X P_0- P_0 X f_0\otimes f_0.
$$
In particular, the mapping $\r_2^+\to \b_{\mathbb{R}}(\b_2(\h)_{ah})$, $(P_0,f_0)\mapsto {\bf P}_{(P_0,f_0)}$ is smooth.
\end{lem}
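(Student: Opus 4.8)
The plan is to verify directly that the operator $R$ defined by the right-hand side of the asserted formula is the $\operatorname{Re}\operatorname{Tr}$-orthogonal projection of $\b_2(\h)_{ah}$ onto $N(\delta_{(P_0,f_0)})$; since $\mathbf P_{(P_0,f_0)}$ is by definition the orthogonal projection onto $\hh_{(P_0,f_0)}=N(\delta_{(P_0,f_0)})^\perp$, this immediately gives the stated formula for $\mathbf Q=1-\mathbf P_{(P_0,f_0)}$. Write $q:=f_0\otimes f_0$ for the rank-one orthogonal projection onto $\mathbb C f_0$; since $(P_0,f_0)\in\r$ one has $q\le P_0$, and in particular $q$, $P_0-q$, $P_0^\perp$ are mutually orthogonal projections summing to $1$. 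A direct expansion of $P_0XP_0$ into these three pieces shows that the right-hand side of the lemma equals
$$
R(X)=(P_0-q)\,X\,(P_0-q)+P_0^\perp\,X\,P_0^\perp .
$$
Since $\b_2(\h)$ is a two-sided ideal and $R$ commutes with taking adjoints, $R$ maps $\b_2(\h)_{ah}$ into itself.

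Next I would check the three defining properties. \emph{(i) Range in $N(\delta_{(P_0,f_0)})$:} for $X^*=-X$ we get $R(X)^*=-R(X)$; the first summand has range in $R(P_0)$ and kills $N(P_0)$, the second has range in $N(P_0)$ and kills $R(P_0)$, so $R(X)$ commutes with $P_0$; and $(P_0-q)f_0=0=P_0^\perp f_0$ forces $R(X)f_0=0$. \emph{(ii) $R$ is the identity on $N(\delta_{(P_0,f_0)})$:} by the block description recalled just before the lemma, every such operator is block-diagonal, of the form $\mathrm{diag}(0,X',X_{22})$ with respect to $\h=\langle f_0\rangle\oplus(R(P_0)\ominus\langle f_0\rangle)\oplus N(P_0)$, and one reads off $R(X)=X$. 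Together with (i), $R^2=R$ and $R(\b_2(\h)_{ah})=N(\delta_{(P_0,f_0)})$. \emph{(iii) Symmetry:} for any orthogonal projection $e$ on $\h$, the map $X\mapsto eXe$ is symmetric for the (real part of the) trace pairing, by cyclicity $\operatorname{Tr}((eXe)^*Y)=\operatorname{Tr}(X^*\,eYe)$; as $R$ is a sum of two such maps, it is symmetric on $\b_2(\h)_{ah}$. An idempotent symmetric operator is the orthogonal projection onto its range, so $R=\mathbf Q$.

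For smoothness, note first that $\p_2^+(\h)$ is an embedded submanifold of the affine Hilbert space $P_++\b_2(\h)_h$, so $P_0\mapsto P_0-P_+$ is smooth (indeed real-analytic) into $\b_2(\h)$; composing with the bounded inclusion $\b_2(\h)\hookrightarrow\b(\h)$ and translating by $P_+$, the map $P_0\mapsto P_0$ is smooth into $\b(\h)$. Likewise $(P_0,f_0)\mapsto q=f_0\otimes f_0$ is smooth $\r_2^+\to\b_2(\h)\subset\b(\h)$, being the smooth second-coordinate projection followed by the bounded bilinear map $(f,g)\mapsto f\otimes g$. For $A\in\b(\h)$, left and right multiplication are bounded on $\b_2(\h)$ with norm $\le\|A\|$, so $A\mapsto L_A$ and $A\mapsto R_A$ are bounded linear, hence smooth, maps $\b(\h)\to\b_{\mathbb R}(\b_2(\h)_{ah})$. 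Since $\mathbf Q$ (equivalently $R$) is obtained from $P_0$, $P_0^\perp$, $q$ by a fixed finite combination of left/right multiplications and sums, the assignment $(P_0,f_0)\mapsto\mathbf P_{(P_0,f_0)}=1-R$ is a smooth (polynomial, real-analytic) function of $(P_0,q)$, and therefore of $(P_0,f_0)\in\r_2^+$.

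The step requiring the most care is this last one: the formula for $\mathbf Q$ multiplies $X$ by $P_0$, which is not Hilbert--Schmidt, so one cannot argue inside the Banach algebra $\b_2(\h)$. The point is instead that left/right multiplication by a bounded operator is a bounded operator \emph{on} $\b_2(\h)$ depending \emph{linearly}, hence smoothly, on the multiplier, and that $P_0$ (and $q$) depend smoothly on the base point in the operator norm topology. Everything else is a routine verification.
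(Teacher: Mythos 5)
Your proof is correct and follows essentially the same route as the paper's: a direct verification that the operator defined by the right-hand side is the orthogonal projection onto $N(\delta_{(P_0,f_0)})=\hh_{(P_0,f_0)}^{\perp}$, followed by the observation that left and right multiplication by $P_0$ and $f_0\otimes f_0$ depend smoothly on the base point as bounded operators on $\b_2(\h)_{ah}$. In fact you are more thorough than the paper, which only checks that ${\bf Q}(X)$ lies in the kernel and leaves idempotency, symmetry, and the identity on the kernel implicit; your rewriting ${\bf Q}(X)=(P_0-f_0\otimes f_0)X(P_0-f_0\otimes f_0)+P_0^{\perp}XP_0^{\perp}$ makes all of these checks transparent.
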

\begin{proof}
Since $f_0\in R(P_0)$, clearly ${\bf Q}(X)$ commutes with $P_0$. Also it is clear that if  $X$ anti-Hermitian and Hilbert-Schmidt, then so is ${\bf Q}(X)$. Finally, note that ${\bf Q}(X)f_0=0$. With this formula, it is clear that the map $(P_0,f_0)\mapsto {\bf P}_{(P_0,f_0)}$ is smooth.
\end{proof}
These facts imply:
\begin{coro}
The distribution of supplements $\r_2^+\ni (P,f)\mapsto \hh_{(P,f)}$ defines a reductive structure in $\r_2^+$ (as a $\u_2(\h)$-homogeneous space). 
\end{coro}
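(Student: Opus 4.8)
The plan is to check that the data already assembled in this subsection are precisely what is required to exhibit a reductive structure on $\r_2^+$, viewed as the homogeneous space $\u_2(\h)/\ii_2(P_0,f_0)$. Recall that such a structure is a smooth, $\u_2(\h)$-equivariant assignment $(P,f)\mapsto\mathfrak{m}_{(P,f)}\subset\b_2(\h)_{ah}$ of closed complements to the vertical subspaces, the vertical subspace at a point being the Lie algebra of the isotropy group there. So the first step is to identify $\mathrm{Lie}(\ii_2(P_0,f_0))$ with $N(\delta_{(P_0,f_0)})$: differentiating the defining relations $VP_0V^*=P_0$, $Vf_0=f_0$ at $V=1$ gives $[Z,P_0]=0$ and $Zf_0=0$, and since the restricted orbit map $\rho_{(P_0,f_0)}:\u_2(\h)\to\r_2^+$ is a submersion (the $\u_2(\h)$-version of Proposition \ref{smooth}), $\ker\delta_{(P_0,f_0)}$ is exactly the tangent space of the fibre, hence the isotropy Lie algebra.

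With this identification, the three ingredients are in hand. The algebraic splitting $\b_2(\h)_{ah}=N(\delta_{(P_0,f_0)})\oplus\hh_{(P_0,f_0)}$, into an orthogonal direct sum of closed subspaces, is the Proposition established just before the Levi-Civita subsection. The invariance of $\hh_{(P_0,f_0)}$ under $\mathrm{Ad}$ of the isotropy group is the computation at the beginning of this subsection: for $V\in\ii_2(P_0,f_0)$, conjugation by $V$ preserves $N(\delta_{(P_0,f_0)})$ because $VYV^*$ commutes with $P_0$ and annihilates $f_0$ whenever $Y$ does (using $VP_0=P_0V$ and $Vf_0=f_0$), and since $\mathrm{Ad}(V)$ is $Re\,Tr$-orthogonal it preserves the orthogonal complement $\hh_{(P_0,f_0)}$ as well. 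Finally, smoothness of the distribution $(P,f)\mapsto\hh_{(P,f)}$ is the Lemma just proved, via the explicit formula for ${\bf Q}={\bf P}^\perp_{(P,f)}$.

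It remains to record the one point that still needs an (entirely routine) verification: the globally defined distribution is not merely invariant under each isotropy group but genuinely $\u_2(\h)$-equivariant, $\mathrm{Ad}(U)\,\hh_{(P,f)}=\hh_{U\cdot(P,f)}$ for all $U\in\u_2(\h)$. I would read this straight off the formula for ${\bf Q}$ in the Lemma: replacing $(P,f)$ by $U\cdot(P,f)=(UPU^*,Uf)$ and using $UP^\perp U^*=(UPU^*)^\perp$ and $Uf\otimes Uf=U(f\otimes f)U^*$, each summand of ${\bf Q}_{U\cdot(P,f)}(X)$ takes the form $U[\,\cdots(U^*XU)\cdots\,]U^*$, so that ${\bf Q}_{U\cdot(P,f)}=\mathrm{Ad}(U)\circ{\bf Q}_{(P,f)}\circ\mathrm{Ad}(U)^{-1}$ --- exactly the analogue of Remark \ref{covarianza Pi}. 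Passing to orthogonal complements yields ${\bf P}_{U\cdot(P,f)}=\mathrm{Ad}(U)\circ{\bf P}_{(P,f)}\circ\mathrm{Ad}(U)^{-1}$ and hence $\hh_{U\cdot(P,f)}=R({\bf P}_{U\cdot(P,f)})=\mathrm{Ad}(U)\,R({\bf P}_{(P,f)})=\mathrm{Ad}(U)\,\hh_{(P,f)}$; specializing $U$ to the isotropy group recovers the invariance already observed, so there is no circularity. I do not expect any genuine obstacle here: the argument is simply the bookkeeping needed to see that every clause in the definition of a reductive homogeneous space has been checked, the equivariance computation above being the only (trivial) calculation involved.
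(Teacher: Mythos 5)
Your proposal is correct and follows essentially the same route as the paper, which derives the corollary directly from the two preceding facts: the $\mathrm{Ad}$-invariance of $N(\delta_{(P_0,f_0)})$ (hence of its orthogonal complement $\hh_{(P_0,f_0)}$) under the isotropy group, and the smoothness of $(P,f)\mapsto {\bf P}_{(P,f)}$ established via the explicit formula for ${\bf Q}$. Your added verification of full $\u_2(\h)$-equivariance of the distribution is the same bookkeeping the paper leaves implicit, carried out in the same way as Remark \ref{covarianza Pi}.
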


\begin{rem}
For $(P_0,f_0)\in\r_2^+$, we will denote
$$
\delta_0=\delta_{(P_0,f_0)}=(\rho_{(P_0,f_0)})_{*1},
$$
then the map $\delta_0|_{\hh_{(P_0,f_0)}}:\hh_{(P_0,f_0)}\to (T\r_2^+)_{(P_0,f_0)}$
is a linear isomorphism. Let us denote by $\kappa_{(P_0,f_0)}$ its inverse. The distribution $(P,f)\mapsto \kappa_{(P,f)}$ is usually called the 1-form of the reductive structre, and is used to define the invariants of the linear connection. 

\end{rem}
Accordingly, we define a Hilbert-Riemann metric which is coherent with this structure: we postulate that the 1-form to be isometric.
That is, 
\begin{defi}\label{metrica reductiva}
If $(X,g)\in(T\r_2^+)_{(P_0,f_0)}$, then 
$$
|(X,g)|^2_{r,(P_0,f_0)}=-Tr\left((\kappa_{(P_0,f_0)}(X,g))^2\right),
$$
i.e., $|(X,g)|_{r,(P_0,f_0)}=\|Z\|_2$, where $Z\in\hh_{(P_0,f_0)}$ is the unique element such that $\delta_{(P_0,f_0)}(Z)=(X,g)$.
\end{defi}

Note that we endowed $\r_2^+$ with the \textit{quotient metric}, giving it the Riemannian metric such that the map $\rho_0=\rho_{(P_0,f_0)}:\u_2(\h)\to \r_2^+$ is a \textit{Riemannian submersion}. That is, $\rho_0$  is a smooth submersion such that for any $U\in \u_2(\h)$, if $(P,f)=\rho(U)$, then $\delta=\delta_{(P,f)}$ is an isometry between $\hh_{(P,f)}$ and $(T\r_2^+)_{(P,f)}$.

\begin{rem}
The metric defined above in Defintion \ref{metrica reductiva} coincides with the quotient metric, as defined in (\ref{norma de R}), if one replaces the operator norm by the Hilbert-Schmidt norm.
\end{rem}

If $V\in (T\r_2^+)_{(P,f)}$, then $\kappa_{(P,f)}(V)\in \hh_{(P,f)}$ is its unique \textit{horizontal lift};  and if $X$ is a smooth vector field in $\r_2^+$ then $X^h=\kappa\circ X: u\mapsto \kappa_{\rho(u)}(X_{\rho(u)})$ is a smooth vector field in $\u_2(\h)$, the \textit{horizontal lift} of $X$. For obvious reasons $N(\delta)$, the kernel of $\delta=\rho_*$ is also known as the \textit{vertical space}, and vectors in that kernel are named  \textit{vertical vectors}.

\begin{rem}It is apparent from the definition of quotient metric that if $V=\delta_0(Z)=(ZP_0-P_0Z,Zf_0)\in (T\r_2^+)_{(P_0,f_0)}$ with $Z$ a horizontal vector described as in (\ref{nucleoparaerre reducido}), then
\begin{eqnarray}
\|X\|_q^2 &= & \|Z\|^2=Tr \left(\begin{array}{ccc}  t^2 +g\cdot\overline{g}^t+Y_1Y_1^* & \dots & \dots \\  \dots & \overline{g}^t\cdot g+ Y_2Y_2^* & \dots  \\  \dots & \dots & Y_1Y_1^*+Y_2Y_2^* \end{array} \right)\nonumber\\
&=& t^2+ 2\|g\|^2+2\|Y_1\|_2^2+2\|Y_2\|_2^2.\nonumber
\end{eqnarray}
On the other hand, if we endow $\r_2^+$ with the \textit{ambient} metric of Definition \ref{metrica 2}, we obtain
\begin{eqnarray}
\|X\|_a^2 &=&\|\delta(Z)\|_2^2=\|ZP_0-P_0Z\|^2+\|Zf_0\|^2\nonumber\\
&=& \left\|\left(\begin{array}{ccc}  0 & 0 & Y_1 \\  0 & 0 & Y_2  \\  Y_1^* & Y_2^* & 0 \end{array}\right)\right\|_2^2+\|(it \; g\; Y_1)\|^2\nonumber\\
&=& Tr \left(\begin{array}{ccc}  Y_1Y_1^* & \dots & \dots \\  \dots &  Y_2Y_2^* & \dots  \\  \dots & \dots & Y_1^*Y_1+Y_2^*Y_2 \end{array} \right)+t^2+\|g\|^2+\|Y_1\|^2\nonumber\\
&=& t^2+\|g\|^2+3\|Y_1\|_2^2+2\|Y_2\|_2^2.\nonumber
\end{eqnarray}
Therefore
$$
\frac{1}{\sqrt{2}}\|X\|_q\le \|X\|_a \le \sqrt{\frac{3}{2}} \|X\|_q
$$
which makes both Riemannian metrics equivalent (the inequalities are sharp). Consequently, if $d$ denotes the Riemannian distance given by taking the infima of the lengths of paths joining given endpoints, both distances are equivalent with constants
\begin{equation}\label{unieq}
\frac{1}{\sqrt{2}}\,d_q(x,y)\le  d_a(x,y) \le  \sqrt{\frac{3}{2}} \, d_q(x,y)
\end{equation}
for all $x,y\in \r_2^+$.
\end{rem}

\begin{coro}The distance induced by the Levi-Civita connection of the quotient metric induces in $\r_2^+$ the same ambient topology of its embedding in the Hilbert space $\b_2(\h)_h\times \h$.
\end{coro}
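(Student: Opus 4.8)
The plan is to reduce the statement to a comparison of the quotient distance $d_q$ with the distance $\|\cdot-\cdot\|$ of the ambient Hilbert space $\b_2(\h)_h\times\h$, and to establish the two resulting inclusions of topologies separately: \eqref{unieq} takes care of one direction, and the smooth local cross sections of Section \ref{sec2} take care of the other.

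For the first inclusion, observe that if $\gamma:[0,1]\to\r_2^+$ is piecewise smooth then its length for the ambient metric of Definition \ref{metrica 2} is $\int_0^1\|\dot\gamma(t)\|_{\b_2(\h)_h\times\h}\,dt\ge\|\gamma(1)-\gamma(0)\|_{\b_2(\h)_h\times\h}$, since that metric is the restriction of the flat Hilbert norm. Taking infima over curves gives $d_a(x,y)\ge\|x-y\|_{\b_2(\h)_h\times\h}$, whence $d_q(x,y)\ge\sqrt{2/3}\,\|x-y\|_{\b_2(\h)_h\times\h}$ by \eqref{unieq}. Thus the identity map from $(\r_2^+,d_q)$ to $\r_2^+$ with the topology inherited from $\b_2(\h)_h\times\h$ is Lipschitz, so the $d_q$-topology refines the ambient one.

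For the reverse inclusion it suffices to prove that for every $(P_0,f_0)\in\r_2^+$ and $\varepsilon>0$ there is an ambient-open neighborhood $V$ of $(P_0,f_0)$ with $d_q\big((P_0,f_0),x\big)<\varepsilon$ for all $x\in V$. By the restricted version of Proposition \ref{secloc} invoked at the beginning of this section, the map $\rho_{(P_0,f_0)}:\u_2(\h)\to\r_2^+$ has a smooth local cross section $\sigma$ defined on an ambient-open neighborhood $\mathcal B$ of $(P_0,f_0)$, with $\sigma(P_0,f_0)=1$ and $\rho_{(P_0,f_0)}\circ\sigma=\mathrm{id}_{\mathcal B}$. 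Shrinking $\mathcal B$ we may assume $\|\sigma(x)-1\|_2$ is small enough there that $Z_x:=\log\sigma(x)\in\b_2(\h)_{ah}$ is well defined; then $x\mapsto Z_x$ is continuous, $Z_{(P_0,f_0)}=0$, and $\|Z_x\|_2\le C\,\|\sigma(x)-1\|_2$ for some constant $C$. The path $t\mapsto e^{tZ_x}$, $t\in[0,1]$, joins $1$ to $\sigma(x)$ in $\u_2(\h)$ with Hilbert--Schmidt length $\|Z_x\|_2$, and its image under $\rho_{(P_0,f_0)}$ is a path in $\r_2^+$ from $(P_0,f_0)$ to $x$; since $\rho_{(P_0,f_0)}$ is a Riemannian submersion for the quotient metric its differential is norm-nonincreasing, so this image has $d_q$-length at most $\|Z_x\|_2$. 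Hence $d_q\big((P_0,f_0),x\big)\le\|Z_x\|_2\to 0$ as $x\to(P_0,f_0)$ in the ambient norm, because $\sigma$ is continuous. So every $d_q$-ball around $(P_0,f_0)$ contains an ambient neighborhood, i.e. the ambient topology refines the $d_q$-topology. Combined with the previous paragraph, this gives equality.

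The only slightly delicate point is the use of the Lie group $\u_2(\h)$ in the last step: one needs that $\exp$ maps $\b_2(\h)_{ah}$ into $\u_2(\h)$ and is a diffeomorphism near $0$ (so that $\log\sigma(x)$ is meaningful and is controlled by $\|\sigma(x)-1\|_2$), and that a Riemannian submersion does not increase the length of \emph{any} curve, not merely of horizontal ones; both facts are standard. Alternatively, one can avoid the group altogether and obtain the reverse inclusion from the general principle that an embedded Riemannian submanifold of a Hilbert space has, in any local parametrization, a metric tensor that is continuous and positive definite, hence pinched between positive multiples of the flat inner product on a small ball, so that $d_q$ is there locally Lipschitz-equivalent to the chord distance.
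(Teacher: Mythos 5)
Your argument is correct. The paper's own proof is a one-liner: it combines the uniform equivalence (\ref{unieq}) of $d_q$ and $d_a$ with the general fact (Lang, \emph{Differential and Riemannian manifolds}, Prop.\ 6.1) that the distance of a Riemannian metric induces the manifold topology, which for the embedded submanifold $\r_2^+$ is the subspace topology of $\b_2(\h)_h\times\h$. You use (\ref{unieq}) in exactly the same way for the inclusion ``$d_q$-topology refines ambient topology'' (via $d_a(x,y)\ge\|x-y\|$, which is correct since the ambient metric is the restriction of the flat one), but for the reverse inclusion you replace the citation by an explicit construction: the continuous local cross section $\sigma$ of $\rho_{(P_0,f_0)}$ in $\u_2(\h)$ produces, for $x$ ambient-close to $(P_0,f_0)$, the path $t\mapsto e^{tZ_x}\cdot(P_0,f_0)$ with $Z_x=\log\sigma(x)$, whose $d_q$-length is at most $\|Z_x\|_2\to 0$ because the quotient norm of $\dot\gamma(t)=\delta_{\gamma(t)}(Z_x)$ is bounded by the norm of the lifting $Z_x$. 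This is sound (it only requires that the restricted cross section is continuous into $\u_2(\h)$ with the $\b_2$-topology, which the paper asserts when it restricts Proposition \ref{secloc} to the restricted setting), and it buys a self-contained argument with an explicit modulus of continuity at the cost of invoking the homogeneous structure; your closing ``alternative'' via a local parametrization with a pinched metric tensor is essentially the proof of the proposition of Lang that the paper cites.
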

\begin{proof}
Combine the equivalence of distances of the previous remark, with the fact that $\r_2^+$ is an embedded submanifold, and use that the Riemannian distance gives the manifold topology \cite[Proposition 6.1]{lang}.
\end{proof}

\medskip

We collect below some known results we will use on Riemannian submersions:

\begin{teo}Let $\rho: (U,g_0)\to (R,g)$ be a Riemannian submersion, let $X,Y$ be smooth vector fields in $(R,g)$. Then
\begin{enumerate}
\item If $Z_0$ is a smooth vertical field in $(U,g_0)$, then 
$$
[X,Y]^v:=[X^h,Y^h]-[X,Y]^h\textrm{ and } [X^h,Z_0]
$$
are smooth vertical vector fields in $(U,g_0)$ (here $[\cdot,\cdot]$ denotes the usual Lie bracket of smooth vector fields).
\item If $\nabla^0$ is the Levi-Civita connection of $(U,g_0)$ and $\nabla$ is the Levi-Civita connection of $(R,g)$, then
$$
(\nabla_XY)_{\rho(u)}=\delta(\nabla^0_{X^h}Y^h)
$$
where $\delta=\rho_*$ and moreover
$$
\nabla^0_{X^h}Y^h=(\nabla_XY)^h+\frac{1}{2}[X,Y]^v.
$$
\item Let $\Gamma$ be a geodesic of $(U,g_0)$, with $\Gamma'(0)$ horizontal, then $\Gamma'(t)$ is horizontal for all $t$ in the domain of $\Gamma$, and $\gamma=\rho(\Gamma)$ is a geodesic of $(R,g)$ of the same length of $\Gamma$. Conversely, if $\gamma$ is a geodesic of $(R,g)$ with $\gamma(0)=\rho(u)$, there exists a unique horizontal lift $\Gamma$ of $\gamma$ such that $\Gamma(0)=u$ and $\Gamma$ is a geodesic of $(U,g_0)$.
\item The exponential maps are thus related by $Exp^R(v)=\rho\circ\exp^U(v^h)$.
\item If $(U,g_0)$ is complete (resp. geodesically complete) then $(R,g)$ is complete (resp. geodesically complete).
\item If $X,Y$ are orthonormal, the sectional curvatures are related by O'Neill's formula.
$$
K^R(X,Y)=K^U(X^h,Y^h)+\frac{3}{4}\|[X,Y]^v\|^2.
$$
\item In particular, if $R=U/I$ is a quotient of Lie groups and the metric $g_0$ on $U$ is bi-invariant, then geodesics of $(R,g)$ are given by $\rho(e^{tZ})$ where $Z$ is an horizontal vector, and sectional curvature is positive and given by
$$
K^R(v,w)=\frac{1}{4}\|[v,w]^h\|^2+\|[v,w]^v\|^2,
$$
if $v,w\in (TR)_{\rho(1)}$ and now $[\cdot,\cdot]$ denotes the usual Lie algebra bracket.
\end{enumerate}
\end{teo}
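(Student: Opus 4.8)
The plan is to recognize all seven items as the classical package of formulas for Riemannian submersions due to O'Neill; the whole statement reduces to the two algebraic identities in items 1--2 and to the analytic fact in item 3 that a geodesic starting with horizontal velocity keeps it, after which items 4--7 are short corollaries. For item 1 I would use that if $\tilde E,\tilde F$ are $\rho$-related to fields $E,F$ downstairs then $[\tilde E,\tilde F]$ is $\rho$-related to $[E,F]$: since $X^h,Y^h$ are $\rho$-related to $X,Y$ and every vertical field is $\rho$-related to the zero field, both $[X^h,Y^h]-[X,Y]^h$ and $[X^h,Z_0]$ are $\rho$-related to $0$, hence vertical. For item 2 I would plug the triple of horizontal lifts $X^h,Y^h,W^h$ into Koszul's formula for $\nabla^0$: the derivative terms collapse to the corresponding terms of Koszul's formula for $\nabla$ precomposed with $\rho$, because $\langle X^h,Y^h\rangle_{g_0}=\langle X,Y\rangle_g\circ\rho$ and such functions are constant along the fibres, and the bracket terms do likewise because by item 1 the discrepancy $[X^h,Y^h]-[X,Y]^h$ is vertical, hence $g_0$-orthogonal to $W^h$; this yields the first identity and $\delta(\nabla^0_{X^h}Y^h)=\nabla_XY$. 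Running the same Koszul computation with a vertical field $V$ in the last slot kills every derivative term (two of the inner products vanish identically, the third is constant along fibres) and every bracket term except $\tfrac12\langle[X^h,Y^h],V\rangle=\tfrac12\langle[X,Y]^v,V\rangle$, which identifies the vertical part of $\nabla^0_{X^h}Y^h$ as $\tfrac12[X,Y]^v$.

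Item 3 is the core of the argument. Given a $\nabla^0$-geodesic $\Gamma$ with $\Gamma'(0)$ horizontal, I would decompose $\Gamma'=H+V$ into horizontal and vertical parts and set $f(t)=|V(t)|_{g_0}^2$. Differentiating and using $\nabla^0_{\Gamma'}\Gamma'=0$ gives $f'=-2\langle\nabla^0_{\Gamma'}H,V\rangle_{g_0}$; writing $\nabla^0_{\Gamma'}H=\nabla^0_HH+\nabla^0_VH$, the first summand contributes nothing because the O'Neill fundamental tensor is alternating on horizontal arguments, so $\langle\nabla^0_HH,V\rangle=\langle A_HH,V\rangle=0$, while $\langle\nabla^0_VH,V\rangle$ is $C^\infty$-bilinear in $(V,H)$ (tensoriality in $H$ because $\langle H,V\rangle=0$), so on any relatively compact parameter subinterval $|f'(t)|\le C\,|\Gamma'(t)|_{g_0}\,f(t)$. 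Since $|\Gamma'|_{g_0}$ is constant along a geodesic, Gr\"onwall's inequality together with $f(0)=0$ forces $f\equiv0$, so $\Gamma'$ stays horizontal. Then $\gamma=\rho\circ\Gamma$ has $\gamma'=\rho_*\Gamma'$ with $\Gamma'$ the horizontal lift of $\gamma'$, so by the version of item 2 along curves $\nabla_{\gamma'}\gamma'=\delta(\nabla^0_{\Gamma'}\Gamma')=0$, and $L(\gamma)=L(\Gamma)$ because $\rho$ restricts to a linear isometry on each horizontal subspace. The converse is a uniqueness argument: given a geodesic $\gamma$ of $R$ with $\gamma(0)=\rho(u)$, lift $\gamma'(0)$ horizontally to $v\in(TU)_u$, integrate the geodesic equation in $U$ from $(u,v)$, observe that the resulting $\Gamma$ has horizontal velocity (first part) and that $\rho\circ\Gamma$ is a geodesic agreeing with $\gamma$ to first order at $t=0$, hence $\rho\circ\Gamma=\gamma$; uniqueness of $\Gamma$ is uniqueness of geodesics in $U$.

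The remaining items are quick. Item 4 is item 3 read at $t=1$. For item 5, geodesic completeness passes down because every geodesic of $R$ is a $\rho$-image of a geodesic of $U$; for metric completeness I would first record $d_R(\rho(u),y)=\inf\{d_U(u,u'):\rho(u')=y\}$ (``$\le$'' since $\rho$ is distance non-increasing, ``$\ge$'' by lifting paths horizontally), which lets me lift a Cauchy sequence of $R$ --- after thinning it to a subsequence with summable consecutive distances --- to a Cauchy sequence of $U$, take its limit and project it back. Item 6 is O'Neill's curvature identity: substitute the formulas of item 2 into the curvature tensor of $\nabla^0$ evaluated on horizontal lifts and collect the $[X,Y]^v$-terms, whose net coefficient is $\tfrac34$. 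Item 7 specializes items 3 and 6 to the normal homogeneous case: with the horizontal distribution taken left-invariant, $t\mapsto e^{tZ}$ with $Z$ horizontal has horizontal velocity and is a geodesic of the bi-invariant metric, hence projects to a geodesic of $R$; feeding the bi-invariant sectional curvature $K^U(X,Y)=\tfrac14\|[X,Y]\|^2$ into O'Neill's formula and splitting the Lie bracket into its horizontal and vertical parts yields $K^R(v,w)=\tfrac14\|[v,w]^h\|^2+\|[v,w]^v\|^2$.

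The only places where some care is needed, and the ones I would write out in full, are the Gr\"onwall step in item 3 and the Cauchy-sequence lifting in item 5: since $U$ is here infinite-dimensional there is no Hopf--Rinow theorem to fall back on, so one must check that these arguments use only the submersion property, the completeness hypothesis on $U$, and local compactness of the parameter interval --- never local compactness of $U$ or $R$.
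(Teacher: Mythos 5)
Your outline is mathematically sound, but it is worth knowing that the paper does not prove this theorem at all: it is explicitly presented as a collection of known facts about Riemannian submersions, and the ``proof'' is a pointer to Gallot--Hulin--Lafontaine (Lemma 3.54, Propositions 3.55 and 2.109, Theorems 3.61 and 3.65). What you have written is essentially the standard O'Neill-style derivation that those references contain: $\rho$-relatedness of brackets for item 1, the Koszul formula on horizontal lifts for item 2, and the specialization of O'Neill's curvature identity to a bi-invariant metric for items 6--7 are all the canonical arguments. Two of your steps deserve a remark. In item 3, the decomposition $\nabla^0_{\Gamma'}H=\nabla^0_HH+\nabla^0_VH$ requires extending $H$ and $V$ off the curve (or phrasing everything through the pointwise-defined O'Neill tensors $A$ and $T$); the cleaner and more common route is to horizontally lift the downstairs geodesic $\gamma$ through $\Gamma(0)$, verify via item 2 that this lift is a geodesic of $U$, and conclude $\Gamma$ equals it by uniqueness of geodesics --- this avoids Gr\"onwall entirely. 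In item 5, your distance formula $d_R(\rho(u),y)=\inf\{d_U(u,u'):\rho(u')=y\}$ rests on \emph{global} existence of horizontal lifts of arbitrary paths, which is not automatic for an abstract Riemannian submersion of Hilbert manifolds; it does hold in the paper's application because $\rho$ there is the quotient of the Banach--Lie group $\u_2(\h)$ by a closed isotropy subgroup, so the fibration is a principal bundle. Your closing caveat about the absence of Hopf--Rinow in infinite dimensions is exactly the right concern, and your argument does respect it. In short: you have supplied an actual proof where the paper supplies a citation, at the cost of having to be careful precisely at the two points you identified.
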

\begin{proof}
See for instance \cite{gallot} Lemma 3.54, Proposition 3.55, Proposition 2.109, Theorem 3.61, Theorem 3.65.
\end{proof}

\begin{rem} If $Z(t)$ is a tangent field along a curve $\gamma(t)\in\r_2^+$, the covariant derivative $D_{\dot{\gamma}}Z$ can be computed using the formula in the second item above, and the formula for the covariant derivative in the unitary group
$$
(\nabla^0_X Y)_u=DY_u(X_u)+\frac{1}{2}\left[X_uY_u^* u+uY_u^*X_u  \right].
$$
Combining these formulas, we obtain
$$
D_{\dot{\gamma}}Z=\delta_{\gamma}\left(\frac{d}{dt}\kappa_{\gamma}(Z)-\frac{1}{2}\left[\kappa_{\gamma}(\dot{\gamma})\kappa_{\gamma}(Z)+\kappa_{\gamma}(Z)\kappa_{\gamma}(\dot{\gamma}) \right] \right).
$$
\end{rem}

\begin{rem}
Since the geodesics of the unitary group are the one-parameter groups $t\mapsto e^{tZ}$, it follows from the fourth item above that geodesics or $\r_2^+$ are exactly the image through $\rho$ of the one-parameter groups with horizontal speed, i.e. $D_{\dot{\gamma}}\dot{\gamma}=0$ and $\gamma(0)=(P,f)$ if and only if $\gamma(t)=\rho(e^{tZ})$ with $Z\in \hh_{(P,f)}$

The exponential map of $\r_2^+$ with the quotient metric is then given by 
$$
Exp(V)=Exp_{(P_0,f_0)}(V)=\rho(e^{\kappa_{(P_0,f_0)}V}),
$$
or equivalently, if $V=(ZP_0-P_0Z,Zf_0)$ for horizontal $Z$, then
\begin{equation}\label{riemexp}
Exp : V=(ZP_0-P_0Z,Zf_0)\mapsto (e^ZP_0e^{-Z},e^Zf_0).
\end{equation}

Since $\delta_0=\delta|_{\hh_{(P_0,f_0)}}:\hh_{(P_0,f_0)}\to (\r_2^+)_{(P_0,f_0)}$ is an isometric isomorphism, we can equivalently study the map  $E=Exp\circ\kappa=\rho\circ \exp:\hh_{(P_0,f_0)}\to \r_2^+$, that is
\begin{equation}\label{riemexpsimpli}
E: Z\mapsto (e^ZP_0e^{-Z},e^Zf_0)=\rho(e^Z).
\end{equation}
\end{rem}

If $exp_{*Z}(W)$ denotes the differential at $Z$ of the Lie-group exponential, $e^Z=\sum_{n\ge 1}\frac{1}{n!}Z^n$, evaluated in $W$,  then it is well-known that
$$
exp_{*Z}(W)=e^Z(F(ad Z)W)=\int_0^1 e^{(1-s)Z}We^{sZ}ds=(G(ad Z)W)e^{Z}.
$$
Here $F,G:\mathbb C\to\mathbb C$ denote the entire functions $F(\lambda)=\frac{1-e^{-\lambda}}{\lambda}, F(\lambda)=\frac{e^{\lambda}-1}{\lambda}$, note that $G(-\lambda)=-F(\lambda)$. Therefore
$$
E_{*Z}(W)=( e^Z(F(ad Z)W)P_0e^{-Z}+e^ZP_0(G(-ad Z)W)e^{-Z}, e^Z(F(ad Z)W)f_0),
$$
and thus
\begin{equation}\label{riemexpdif}
e^{-Z}\cdot E_{*Z}(W)=( (F(ad Z)W)P_0-P_0(F(ad Z)W),(F(ad Z)W)f_0)=\delta_0 (T(W))
\end{equation}
where we denote $T=T_Z=F(ad Z): \b_2(\h)_{ah}\to \b_2(\h)_{ah} $ for short.  Note that $T$ it is a bounded linear operator.

\begin{rem}
Since the roots of $F$ are located at $\{\lambda=2k\pi i, k\in \mathbb Z\ne 0\}$, and $\|ad Z\|\le 2\|Z\|$, it is well-know that as long as $\|Z\|<\pi$, $T$ is an invertible operator. In what follows we need to refine this remark to obtain better control on the range of $T$ acting on $\hh$. Consider the real even function
$$
g(t)=\frac{\sin(t)}{t}+\frac{\cos(t)-1}{t^2},
$$
extended as $g(0)=1/2$ to make it continuous. Then the first root of $g$ will be denoted by $r_0>0$, a straightforward computation indicates that $\frac{\pi}{2}<r_0<\frac{2\pi}{3}$ and $g$ is strictly decreasing in $(0,r_0)$ (and increasing in $(-r_0,0)$ by symmetry).
\end{rem}

\begin{lem}
For each $Z\in \hh_{(P_0,f_0)}$, let $T=F(ad Z)$. Then $T$ is a normal contraction acting on $\b_2(\h)_{ah}$, and moreover if $\|Z\|<\frac{r_0}{2}$,  then $\|T-1\|<1$.
\end{lem}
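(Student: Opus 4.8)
The plan is to identify $T=F(ad Z)$ as the image of the \emph{normal} operator $ad Z$ under the entire functional calculus, and then to read off both assertions from the spectrum of $ad Z$, which lies on a bounded interval of the imaginary axis.

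First I would record that $ad Z$ is skew-adjoint. On the complex Hilbert space $\b_2(\h)$ with inner product $\langle A,B\rangle=Tr(AB^*)$, a one-line computation using $Z^*=-Z$ and cyclicity of the trace gives $\langle [Z,A],B\rangle=-\langle A,[Z,B]\rangle$; taking real parts and restricting to $\b_2(\h)_{ah}$ (where $\langle A,B\rangle=-Tr(AB)$ is a genuine real inner product) shows that $ad Z$ is a bounded skew-adjoint operator on $\b_2(\h)_{ah}$, whose complexification is again skew-adjoint. Hence $ad Z$ is normal, $\sigma(ad Z)\subseteq i\mathbb{R}$, and by the remark preceding the lemma $\|ad Z\|\le 2\|Z\|$. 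Since $F$ is entire with real Taylor coefficients, $T=F(ad Z)$ is a norm-convergent power series in $ad Z$; in particular it preserves the real form $\b_2(\h)_{ah}$, and $T^*=F(-ad Z)$ is another power series in $ad Z$, so $TT^*=T^*T$ and $T$ is normal. By the spectral theorem, $\|T\|=\sup\{\,|F(\lambda)|:\lambda\in\sigma(ad Z)\,\}$ and $\|T-1\|=\sup\{\,|F(\lambda)-1|:\lambda\in\sigma(ad Z)\,\}$.

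For the contraction bound I would use that on the imaginary axis $F(it)=\frac{\sin(t/2)}{t/2}\,e^{-it/2}$, so $|F(it)|=\big|\frac{\sin(t/2)}{t/2}\big|\le 1$ for all $t\in\mathbb{R}$; since $\sigma(ad Z)\subseteq i\mathbb{R}$ this yields $\|T\|\le 1$. The key (but routine) computation for the refined estimate is the identity
$$
|F(it)-1|^2=\Big(\frac{\sin t}{t}-1\Big)^2+\Big(\frac{1-\cos t}{t}\Big)^2=1-2g(t),\qquad t\in\mathbb{R},
$$
obtained by splitting $F(it)$ into real and imaginary parts and simplifying with $\sin^2 t+(1-\cos t)^2=2(1-\cos t)$ and $\sin(t/2)\cos(t/2)=\frac12\sin t$. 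Assume now $\|Z\|<r_0/2$, so that $\sigma(ad Z)$ is a compact subset of $i[-2\|Z\|,2\|Z\|]\subset i(-r_0,r_0)$. Since $g$ is even, strictly decreasing on $[0,r_0)$ and positive there (its first positive zero being $r_0$), we get $g(t)\ge g(2\|Z\|)>0$ for $|t|\le 2\|Z\|$; therefore $|F(it)-1|^2=1-2g(t)\le 1-2g(2\|Z\|)<1$ for every $it\in\sigma(ad Z)$, and taking the supremum over the spectrum gives $\|T-1\|\le\sqrt{1-2g(2\|Z\|)}<1$.

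The only step that deserves a little care is making precise that the entire functional calculus of $ad Z$ on the \emph{real} space $\b_2(\h)_{ah}$ is governed by $\sigma(ad Z)$ --- handled by passing to the complexification, on which $ad Z$ is honestly skew-adjoint, and observing that the real Taylor coefficients of $F$ keep both $T$ and $T^*=F(-ad Z)$ inside the commutative algebra generated by $ad Z$ and inside the real form $\b_2(\h)_{ah}$. Everything else is the trace identity for skew-adjointness and the elementary trigonometric identity for $|F(it)-1|^2$, combined with the monotonicity of $g$ and the location of $r_0$ recorded in the preceding remark.
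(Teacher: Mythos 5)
Your proof is correct and follows essentially the same route as the paper's: $ad\, Z$ is skew-adjoint for the (real) trace inner product on $\b_2(\h)_{ah}$, hence $T=F(ad\, Z)$ is normal and its norms are computed from the spectrum, and the identity $|F(it)-1|^2=1-2g(t)$ together with $\sigma(ad\, Z)\subset i(-r_0,r_0)$ yields the refined bound. The only (harmless) deviations are that you obtain $\|T\|\le 1$ from the spectral formula $|F(it)|=\bigl|\sin(t/2)/(t/2)\bigr|\le 1$ rather than from the paper's integral representation $F(ad\, Z)=\int_0^1 e^{s\, ad\, Z}\,ds$ with $e^{s\,ad\, Z}$ unitary, and that you are slightly more careful about complexifying the real Hilbert space and about extracting the strict inequality from compactness of the spectrum.
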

\begin{proof}
From the definition of $ad Z(W)=ZW-WZ$ and the trace inner product in the Hilbert space $\b_2(\h)_{ah}$, it follows that $ad Z$ is anti-Hermitian for any anti-Hermitian $Z$. Therefore $T=F(ad Z)$ is a normal operator; it is a contraction because
$$
\|T\|=\|F(ad Z)\|=\|\int_0^1 e^{s \,ad Z}ds\|\le \int_0^1 \|e^{s \,ad Z}\|ds\le 1.
$$
Here, since $s \, ad Z$ is anti-Hermitian, $e^{s\, ad Z}$ is unitary. Now since $T$ is normal, its norm can be computed using its spectral radius, and for $it\in\sigma(Z)$ we obtain
\begin{eqnarray}
|F(it)-1|^2 &=& |\frac{1-e^{-it}}{it}-1|^2=\frac{1}{t^2}|1-e^{-it}-it|^2=\frac{1}{t^2}|1-\cos t+i(\sin t-t)|^2\nonumber\\
&=&\frac{1}{t^2} (1+\cos^2t-2\cos t+\sin^2t +t^2-2t\sin t)\nonumber\\
&=& 1+2(\frac{1-\cos t}{t^2}-\frac{\sin t}{t})=1-2g(t).
\end{eqnarray}
This tells us that $\|F(ad Z)-1\|<1$ when $|t|<r_0$. If $\|Z\|<r_0/2$ then $\|ad Z\|<r_0$,  then $\sigma(ad Z)\subset (-ir_0,ir_0)$ and the conclusion of the lemma follows.
\end{proof}

\begin{coro}\label{difiso}
Let $(P,f)=U\cdot (P_0,f_0)\in \r_2^+$, let $Exp_(P,f):(T\r_2^+)_{(P,f)}\to \r_2^+$ be the Riemannian exponential of the quotient metric, given by 
$$
Exp_{(P,f)}(V)=\rho_{(P,f)}(e^{\kappa_{(P,f)}V}).
$$
Then its differential is an isomorphism in the ball of radius $r_0/2$ around $0\in (T\r_2^+)_{(P,f)}$.
\end{coro}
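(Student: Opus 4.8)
The plan is to reduce the statement for a general point $(P,f)=U\cdot(P_0,f_0)$ to the base point $(P_0,f_0)$ by equivariance, and then to use the formula \eqref{riemexpdif} together with the previous lemma. First I would observe that $Exp_{(P,f)}$ and $Exp_{(P_0,f_0)}$ are intertwined by the isometry $L_U:\r_2^+\to\r_2^+$, $L_U(Q,g)=U\cdot(Q,g)$, since $\rho_{(P,f)}=L_U\circ\rho_{(P_0,f_0)}\circ Ad(U^*)$ and $Ad(U^*)$ maps $\hh_{(P,f)}$ isometrically onto $\hh_{(P_0,f_0)}$ (this last fact is Remark \ref{covarianza Pi} combined with the $Ad$-invariance of $\hh$ proved in the reductive-structure discussion). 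Because $L_U$ is a diffeomorphism, $d(Exp_{(P,f)})_V$ is an isomorphism if and only if $d(Exp_{(P_0,f_0)})_{\tilde V}$ is, where $\tilde V=Ad(U^*)$ applied appropriately; and the norm is preserved, so it suffices to prove the claim at $(P_0,f_0)$.

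Next I would pass from $Exp=Exp_{(P_0,f_0)}$ to the map $E=Exp\circ\kappa=\rho\circ\exp:\hh_{(P_0,f_0)}\to\r_2^+$ of \eqref{riemexpsimpli}, which differs from $Exp$ only by the isometric isomorphism $\delta_0=\kappa^{-1}$; hence $d(Exp)_V$ is an isomorphism precisely when $E_{*Z}$ is, where $Z=\kappa_{(P_0,f_0)}(V)$ and $\|Z\|_2=|V|_{r,(P_0,f_0)}<r_0/2$. From \eqref{riemexpdif} we have $e^{-Z}\cdot E_{*Z}(W)=\delta_0(T(W))$ with $T=F(ad\,Z)$. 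The map $W\mapsto e^{-Z}\cdot E_{*Z}(W)$ is the composition of $E_{*Z}$ with the differential of the (smooth, invertible) left action of $e^{-Z}$, which is a linear isomorphism of the relevant tangent spaces; so $E_{*Z}$ is an isomorphism iff $W\mapsto\delta_0(T(W))$ is. Now restrict attention to horizontal $W\in\hh_{(P_0,f_0)}$: here $\delta_0$ is an isometric isomorphism onto $(T\r_2^+)_{(P_0,f_0)}$, so the only thing to check is that $T$ maps $\hh_{(P_0,f_0)}$ bijectively onto itself. By the previous lemma, $\|Z\|_2<r_0/2$ forces $\|T-1\|<1$, so $T$ is invertible on all of $\b_2(\h)_{ah}$; and since $T=F(ad\,Z)$ with $Z\in\hh_{(P_0,f_0)}$ — and, as in the reductive discussion, $ad\,Z$ preserves the splitting $\b_2(\h)_{ah}=N(\delta_0)\oplus\hh_{(P_0,f_0)}$ up to the point that $T-1$, being a power series in $ad\,Z$, carries $\hh$ into itself — $T$ restricts to an invertible operator on $\hh_{(P_0,f_0)}$. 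Composing with $\delta_0$ gives that $W\mapsto\delta_0(T(W))$ is an isomorphism $\hh_{(P_0,f_0)}\to(T\r_2^+)_{(P_0,f_0)}$, hence $E_{*Z}$ is an isomorphism for $\|Z\|_2<r_0/2$, which is the assertion.

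The main obstacle I anticipate is the bookkeeping around \emph{which} spaces are identified with \emph{which} tangent spaces, and in particular verifying that $ad\,Z$ (equivalently $T=F(ad\,Z)$) genuinely preserves the horizontal subspace $\hh_{(P_0,f_0)}$ rather than merely the full Lie algebra $\b_2(\h)_{ah}$ — one must use that $Z$ itself is horizontal and inspect the block structure in \eqref{nucleoparaerre reducido} to see that $[Z,\,\cdot\,]$ does not leak into $N(\delta_0)$ against the trace pairing. A secondary technical point is that $E_{*Z}$ is a priori a map between $\hh_{(P_0,f_0)}$ and $T_{E(Z)}\r_2^+$, while $\delta_0$ lands in $T_{(P_0,f_0)}\r_2^+$; the translation by $e^{-Z}$ in \eqref{riemexpdif} is exactly what bridges these, and one should note that its differential is a linear isomorphism between the two tangent spaces (it is $d(L_{e^{-Z}})$ restricted to the submanifold, and $L_{e^{-Z}}$ is a diffeomorphism of $\r_2^+$). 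Once these identifications are pinned down, the rest is immediate from the lemma and the fact that $r_0/2<\pi$.
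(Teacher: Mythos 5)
Your reduction to the base point and the passage from $Exp$ to $E=\rho\circ\exp$ via (\ref{riemexpdif}) match the paper, but the final step contains a genuine error: you claim that $T=F(\mathrm{ad}\,Z)$ restricts to an invertible operator \emph{on} $\hh_{(P_0,f_0)}$ because $T-1$ is a power series in $\mathrm{ad}\,Z$ and "$\mathrm{ad}\,Z$ preserves the splitting." For horizontal $Z$ this is false. What the reductive structure gives is $Ad(V)$-invariance of the splitting for $V$ in the \emph{isotropy} group, i.e.\ $\mathrm{ad}\,Y$ preserves $\hh$ for \emph{vertical} $Y$; the bracket of two horizontal vectors is in general not horizontal (this is exactly why O'Neill's formula carries the correction term $\frac34\|[X,Y]^v\|^2$). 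Concretely, in the block decomposition (\ref{nucleoparaerre reducido}), take $Z$ and $W$ horizontal with only the $Y_2$-blocks nonzero, say $Y_2$ and $Y_2'$; then
$$
[Z,W]=\left(\begin{array}{ccc} 0&0&0\\ 0& -Y_2Y_2'^*+Y_2'Y_2^* &0\\ 0&0& -Y_2^*Y_2'+Y_2'^*Y_2\end{array}\right),
$$
which is purely vertical and generically nonzero. So $T\hh_{(P_0,f_0)}\neq\hh_{(P_0,f_0)}$ in general, and your argument breaks at the point you yourself flagged as the main obstacle.

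The repair — which is the paper's actual argument — is to drop the claim that $T$ preserves $\hh$ and instead show that $T\hh$ is still a closed \emph{complement} of the vertical space $\hh^{\perp}=\ker\delta_0$, which is all that is needed for $\delta_0|_{T\hh}$ to be an isomorphism onto $(T\r_2^+)_{(P_0,f_0)}$. Both halves follow from the lemma's estimate $\|T-1\|=c_0<1$ when $\|Z\|_2<r_0/2$: the operator $A=T{\bf P}+({\bf 1-P})$ satisfies $\|A-1\|=\|(T-1){\bf P}\|\le c_0<1$, hence is invertible, giving $T\hh+\hh^{\perp}=\b_2(\h)_{ah}$; and if $T{\bf P}\xi=({\bf 1-P})\eta$ with ${\bf P}\xi\neq 0$, then $\|{\bf P}\xi\|^2>\|(1-T){\bf P}\xi\|^2=\|({\bf 1-P})\eta\|^2+\|{\bf P}\xi\|^2$, a contradiction unless $({\bf 1-P})\eta=0$, so $T\hh\cap\hh^{\perp}=\{0\}$. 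With this substitution the rest of your outline (equivariance, the translation by $e^{-Z}$, composing with $\delta_0$) goes through as written.
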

\begin{proof}
Since the action of the unitary group is an isometric isomorphism of the Riemannian manifold $\r_2^+$, it will suffice to prove the assertion for $(P_0,f_0)$. Since $\delta_0$ is an isometric isomorphism, it will be more convenient to prove this assertion of the simplified exponential map $E:\hh_{(P_0,f_0)}\to \r_2^+\subset \b_2(\h)\times \h$ given by equation (\ref{riemexpsimpli}).  We have to show that the differential of $E$ is invertible  in a certain ball; since the action of the unitary group is isometric, it suffices to prove that the map given in (\ref{riemexpdif}) is an isomorphism for $V=(X,g)=\delta_0(Z)=(ZP_0-P_0Z,Zf_0)$ if $\|V\|_q=\|Z\|<r_0/2$ (here $Z$ is the unique horizontal lift of $V$). 

In what follows we denote $\hh=\hh_{(P_0,f_0)}$ and ${\bf P}={\bf P}_{(P_0,f_0)}$ (the orthogonal projection onto the horizontal space $\hh$), Let $A=T{\bf P}+(\bf{1-P})$ which is a bounded operator acting on the Hilbert space $\b_2(\h)_{ah}$ ($T=F(ad X)$ as before). Then $\|A-1\|=\|(T-1){\bf P}\|\le \|1-T\|=c_0<1$ therefore $A$ is invertible, which shows that $\b_2(\h)_{ah}=T\hh+ \ker(\delta)=T\hh+ \hh^{\perp}$. On the other hand if $T{\bf P}\xi=({\bf 1-P})\eta$, 
$$
\|{\bf P}\xi\|^2>\|(1-T){\bf P}\xi\|^2=\|T{\bf P}\xi-{\bf P}\xi\|^2=\|({\bf 1- P})\eta-{\bf P}\xi\|^2=\|({\bf 1-P})\eta\|^2+\|{\bf P}\xi\|^2,
$$
i.e.  $({\bf 1-P})\eta=0$, which proves that $T\hh\cap \hh^{\perp}=\{0\}$.

Therefore $\b_2(\h)_{ah}$ is the direct sum of the kernel of $\delta_0$ (the vertical space, $\hh^{\perp}$) and  $T\hh$,
$$
\b_2(\h)_{ah}=T\hh \,\dot{+}\, \hh^{\perp}.
$$
This implies that $\delta_0|_{T\hh}$ is still an isomorphism onto $(T\r_2^+)_{(P_0,f_0)}$. By equation (\ref{riemexpdif}), this proves the claim that $E_{*Z}$ is an isomorphism.
\end{proof}

Recall the Riemannian distances $d_q,d_a$ in $\r_2^+$ are defined as the infima of the lengths of the piecewise smooth paths joining given endpoints, when the curves are measured with the quotient metric (resp. the ambient metric). Regarding the minimality of geodesics, the injectivity radius, and the metric completeness, the following can be established. 

\begin{teo}
Let $(P_0,f_0)\in\r_2^+$, endowed with the quotient metric given in Definition \ref{metrica reductiva}.
\begin{enumerate}
\item
 For any $(X,g)\in(T\r_2^+)_{(P_0,f_0)}$, let $Z\in\hh_{(P_0,f_0)}$ the unique element such that $\delta_{(P_0,f_0)}(Z)=(X,g)$. Then 
$$
\delta(t)=e^{tZ}\cdot(P_0,f_0),
$$
which is the unique geodesic of the reductive connection which satisfies $\delta(0)=(P_0,f_0)$ and $\dot{\delta}(0)=(X,g)$, has minimal length along its path for $t$ such that $|t|\le \frac{\pi}{4\|Z\|_2}$. This geodesic is unique if $|t|<\frac{\pi}{4\|Z\|_2}$.
\item The space $\r_2^+$ is complete with both its metrics $d_q,d_a$ (quotient and ambient).
\item
Let $(P_1,f_1)\in\r_2^+$ such that $d_q((P_1,f_1),(P_0,f_0))<\pi/4$. Then there exists a unique geodesic $\delta$ of the reductive connection such that $\delta(0)=(P_0,f_0)$ and $\delta(1)=(P_1,f_1)$. This geodesic has minimal length (among smooth curves joining $(P_0,f_0)$ and $(P_1,f_1)$).
\end{enumerate}
\end{teo}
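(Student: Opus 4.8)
The plan is to prove the three assertions in the order (2), (1), (3): the completeness is needed for the rest, and (1) and (3) are two sides of the same normal–neighbourhood statement.

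\smallskip
\noindent\emph{Completeness.} First I would note that $\u_2(\h)$, with the bi-invariant metric induced by the Hilbert–Schmidt inner product, is complete: it is geodesically complete because the geodesics through $1$ are the globally defined one-parameter groups $t\mapsto e^{tZ}$; and it is metrically complete because $\u_2(\h)$ is closed in the affine Hilbert space $1+\b_2(\h)$ and its Riemannian distance dominates, and near each point is comparable to, the Hilbert–Schmidt distance. By the item on Riemannian submersions recalled above (completeness passes from total space to base), $(\r_2^+,d_q)$ is then complete, and by the equivalence \eqref{unieq} so is $(\r_2^+,d_a)$.

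\smallskip
\noindent\emph{Reduction of (1) and (3) to a normal neighbourhood.} Fix $p=(P_0,f_0)$ and write $Exp=Exp_p$ and $E=Exp\circ\kappa_{(P_0,f_0)}=\rho\circ\exp$ on $\hh_{(P_0,f_0)}$, so the geodesic of the statement is $\delta(t)=E(tZ)=Exp(t\,\delta_0(Z))$ and, by the definition of the quotient metric, $\|t\,\delta_0(Z)\|_q=|t|\,\|Z\|_2$. By Corollary \ref{difiso} the differential of $Exp$ is an isomorphism on the open ball of radius $r_0/2$, and $r_0/2>\pi/2>\pi/4$; hence $p$ has no conjugate point within distance $r_0/2$. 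The crux is to show that $Exp$ is \emph{injective} on $B(0,\pi/4)\subset (T\r_2^+)_p$, and hence a diffeomorphism onto an open set $\mathcal N$. Granting this, a Gauss-lemma argument — using that $B(0,\pi/4)$ is bounded-closed inside $B(0,r_0/2)$ where $Exp$ is a local diffeomorphism, and that a path out of $p$ of length $<\pi/4$ cannot escape $\mathcal N$ (if $s^\ast=\sup\{s:\gamma([0,s])\subset\mathcal N\}$, then $Exp^{-1}\gamma$ is Cauchy as $s\to s^\ast$, so converges in the Hilbert space $(T\r_2^+)_p$, forcing $\gamma(s^\ast)\in\mathcal N$) — gives (1) and (3) simultaneously: $B_{d_q}(p,\pi/4)\subset\mathcal N$, inside $\mathcal N$ the radial geodesic to any point is the unique shortest path to it, so $\delta|_{[0,t]}$ is length-minimizing for $|t|\,\|Z\|_2\le\pi/4$ and uniquely so for strict inequality, and every $(P_1,f_1)$ with $d_q((P_0,f_0),(P_1,f_1))<\pi/4$ lies in $\mathcal N$ and is joined to $p$ by a unique minimal geodesic (the radial one). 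That $\delta$ is moreover the only geodesic with the prescribed initial value and velocity is already contained in the submersion theorem (geodesics of $\r_2^+$ are images of one-parameter groups with horizontal speed, the metric upstairs being bi-invariant).

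\smallskip
\noindent\emph{The main obstacle.} Injectivity of $Exp$ on $B(0,\pi/4)$ is, via the isometry $\delta_0|_{\hh_{(P_0,f_0)}}$, injectivity of $E$ on $\{Z\in\hh_{(P_0,f_0)}:\|Z\|_2<\pi/4\}$. If $\rho(e^Z)=\rho(e^{Z'})$ for two such $Z,Z'$, then $W:=e^{-Z'}e^Z\in\ii_2(P_0,f_0)$; since $\|W-1\|=\|e^Z-e^{Z'}\|\le\|Z-Z'\|<\pi/2<2$, the operator $W$ has a (principal) logarithm, which lies in the isotropy algebra $N(\delta_0)=\hh_{(P_0,f_0)}^{\perp}$ (it is anti-Hermitian Hilbert–Schmidt, commutes with $P_0$, and kills $f_0$ since $Wf_0=f_0$). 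One is thus reduced to the group-theoretic assertion: if $Z,Z'\in\hh_{(P_0,f_0)}$ have Hilbert–Schmidt norm $<\pi/4$ and $D\in\hh_{(P_0,f_0)}^{\perp}$ has small operator norm with $e^{Z'}e^{D}=e^Z$, then $D=0$ and $Z=Z'$. I expect this quantitative, genuinely non-commutative estimate — controlling how the horizontal subspace and the isotropy algebra interact under products of exponentials in $\u_2(\h)$ — to be the technical heart of the proof, and the constant $\pi/4$ to be exactly what the elementary version of it yields; it can equivalently be phrased as a lower bound for the length of the shortest geodesic loop of $\r_2^+$ at $p$ (a nonzero horizontal $Z$ with $e^Z\in\ii_2(P_0,f_0)$ forces an eigenvalue circle of $t\mapsto e^{tZ}$ to traverse a full period, so $\|Z\|_2$ is bounded below by an absolute constant $>\pi/2$), which together with the conjugate-radius bound of Corollary \ref{difiso} delivers injectivity of $Exp$ on $B(0,\pi/4)$.

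\smallskip
An alternative route to (1) alone would be to imitate Section \ref{sec4}, building a length non-increasing map from the connected component of $(P_0,f_0)$ in $\r_2^+$ into the unit sphere of an auxiliary Hilbert space under which $\delta$ becomes a minimal great circle of length $4\|Z\|_2\,t$, so that minimality for $|t|\le\pi/(4\|Z\|_2)$ is immediate by comparison of lengths; however the GNS characterization of Proposition \ref{representacion} does not survive the passage to the Hilbert–Schmidt norm (no state can force $\pi(X_0^2)\xi=-\|X_0\|_2^2\xi$ unless $X_0$ is essentially a symmetry), so such a map would have to be produced by hand.
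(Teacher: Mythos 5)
Your outline reproduces the logical skeleton of the argument, but it leaves the central quantitative fact unproven. Everything in items (1) and (3) hinges on showing that $Exp_{(P_0,f_0)}$ is injective on the ball of radius $\pi/4$ — equivalently, that the geodesic $e^{tZ}\cdot(P_0,f_0)$ is minimal (and uniquely so) for $|t|\,\|Z\|_2<\pi/4$ — and at precisely this point you write that you ``expect'' the required non-commutative estimate ($e^{Z'}e^D=e^Z$ with $Z,Z'$ horizontal of $2$-norm $<\pi/4$ and $D$ vertical forces $D=0$) to be the technical heart, without carrying it out. The parenthetical reduction to a lower bound on geodesic loops does not close the gap either: ruling out loops at $p$ is weaker than injectivity of $Exp$ on a ball (two distinct horizontal directions can reach the same point without producing a loop through $p$ of controlled length), and in the infinite-dimensional Hilbert--Riemann setting one cannot invoke a Klingenberg-type argument without further work. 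The paper does not prove this estimate here at all: it quotes it directly from \cite[Theorem 4.9]{agc10} (minimality and the injectivity radius $\pi/4$ for homogeneous spaces of $p$-Schatten unitary groups), quotes \cite[Theorem 5.5]{agc10} for completeness of $(\r_2^+,d_q)$, and then combines the $\pi/4$ bound with Corollary \ref{difiso} and \cite[Theorem 6.4]{lang} to get item (3) — exactly the Gauss-lemma/normal-neighbourhood step you describe. So your reduction of (1) and (3) to injectivity plus Corollary \ref{difiso} matches the paper's strategy for (3), but the substance of (1) is missing.

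Two smaller points. First, the inequality $r_0/2>\pi/2$ is false: the paper only establishes $\pi/2<r_0<2\pi/3$, so $r_0/2\in(\pi/4,\pi/3)$; what is needed, and what the paper uses, is $r_0/2>\pi/4$, so your argument survives this slip. Second, your completeness argument (completeness of $\u_2(\h)$ plus the submersion theorem, then the uniform equivalence (\ref{unieq})) is a legitimate alternative to the paper's citation of \cite[Theorem 5.5]{agc10}, though the metric completeness of $\u_2(\h)$ deserves a line more care (Cauchy for $d$ implies Cauchy in $\|\cdot\|_2$, the limit stays in the closed set $\u_2(\h)\subset 1+\b_2(\h)$, and local comparability of $d$ with $\|\cdot\|_2$ upgrades the convergence to convergence in $d$).
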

\begin{proof}
The first assertion is a consequence of the estimation for the injectivity radius of homogeneous spaces of the $p$-Schatten unitary groups given in \cite[Theorem 4.9]{agc10}. From that theorem also follows the uniqueness when $4\|Z\|_2 \, |t|<\pi$.

Regarding the second assertion, it was also proven in  \cite[Theorem 5.5]{agc10} that homogeneous spaces of the restricted unitary group are complete with the quotient metric. Therefore $(\r_2^+,d_q)$ is complete. Since both metrics are uniformly equivalent by (\ref{unieq}), $(\r_2^+,d_a)$ is also complete.

Now note that the first assertion says that the Riemannian exponential is injective in the open ball of (quotient) radius $\pi/4$. Combining this with Corollary \ref{difiso}, it follows that the injectivity radius of the Riemannian exponential of $\r_2^+$ is at least $\pi/4$ (since $r_0/2>\pi/4$). Therefore by the theory of Hilbert-Riemann manifolds (see for instance \cite[Theorem 6.4]{lang}), the third assertion follows.
\end{proof}

\medskip

\section*{Acknowledgements} This research was supported by CONICET (PIP 2014 11220130100525) and ANPCyT (PICT 2015 1505).

{\sc (Esteban Andruchow)} {Instituto de Ciencias,  Universidad Nacional de Gral. Sar\-miento,
J.M. Gutierrez 1150,  (1613) Los Polvorines, Argentina and Instituto Argentino de Matem\'atica, `Alberto P. Calder\'on', CONICET, Saavedra 15 3er. piso,
(1083) Buenos Aires, Argentina.}

\noindent e-mail: {\sf eandruch@ungs.edu.ar}

\bigskip

{\sc (Eduardo Chiumiento)} {Departamento de de Matem\'atica, FCE-UNLP, Calles 50 y 115, 
(1900) La Plata, Argentina  and Instituto Argentino de Matem\'atica, `Alberto P. Calder\'on', CONICET, Saavedra 15 3er. piso,
(1083) Buenos Aires, Argentina.}     
                                               
\noindent e-mail: {\sf eduardo@mate.unlp.edu.ar}

\bigskip

{\sc (Gabriel Larotonda)} {Instituto de Matem\'atica, `Alberto P. Calder\'on' (CONICET),
\\ and Departamento de Matem\'atica, Facultad de Cs. Exactas y Naturales, Universidad de Buenos Aires. Ciudad Universitaria (1428) CABA, Argentina }

\noindent e-mail: {\sf glaroton@dm.uba.ar }

\end{document}